\documentclass[a4paper]{amsart}

\usepackage{hyperref}
\usepackage{txfonts, amsmath,amstext,amsthm,amscd,amsopn,verbatim,amssymb, amsfonts, mathabx}

\usepackage{xcolor}
\usepackage{diagbox}
\usepackage{array}
\newcommand{\PreserveBackslash}[1]{\let\temp=\\#1\let\\=\temp}
\newcolumntype{C}[1]{>{\PreserveBackslash\centering}p{#1}}
\newcolumntype{R}[1]{>{\PreserveBackslash\raggedleft}p{#1}}
\newcolumntype{L}[1]{>{\PreserveBackslash\raggedright}p{#1}}

\usepackage{subdepth}
\usepackage{tikz}
\usetikzlibrary{math}
\usetikzlibrary{positioning}

\usepackage{fullpage}
\usepackage{todonotes}
\usepackage{multicol}

\usepackage[bbgreekl]{mathbbol}
\usepackage{enumerate}

\usepackage{float}
\restylefloat{table}

\usepackage{color, colortbl}
\usepackage{array}
\usepackage{varwidth} 

\definecolor{Gray}{gray}{0.9}

\newcolumntype{g}{>{\columncolor{Gray}}c}
\newcolumntype{M}{V{4cm}} 

\usepackage{tikz}
\usepackage{tikz-cd}
\usetikzlibrary{matrix}
\usetikzlibrary{shapes}
\usetikzlibrary{arrows, automata}
\usetikzlibrary{calc,3d}
\usetikzlibrary{decorations,decorations.pathmorphing,decorations.pathreplacing,decorations.markings}
\usetikzlibrary{through}

\tikzset{snakeit/.style={decorate, decoration={snake, amplitude=.2mm,segment length=1mm}}}

\tikzset{ext/.style={circle, draw,inner sep=1pt}, int/.style={circle,draw,fill,inner sep=2pt},nil/.style={inner sep=1pt}}
\tikzset{cy/.style={circle,draw,fill,inner sep=2pt},scy/.style={circle,draw,inner sep=2pt},scyx/.style={draw,cross out,inner sep=2pt},scyt/.style={draw,regular polygon,regular polygon sides=3,inner sep=0.95pt}}
\tikzset{exte/.style={circle, draw,inner sep=3pt},inte/.style={circle,draw,fill,inner sep=3pt}}
\tikzset{diagram/.style={matrix of math nodes, row sep=3em, column sep=2.5em, text height=1.5ex, text depth=0.25ex}}
\tikzset{diagram2/.style={matrix of math nodes, row sep=0.5em, column sep=0.5em, text height=1.5ex, text depth=0.25ex}}
\tikzset{rowcolsep/.style={column sep=.2cm, row sep=.1cm}}

\tikzset{
  crossed/.style={
    decoration={markings,mark=at position .5 with {\arrow{|}}},
    postaction={decorate},
    shorten >=0.4pt}}

\tikzset{every picture/.style={baseline=-.65ex} }
\tikzset{every loop/.style={draw}}

  \tikzset{->-/.style={decoration={
    markings,
    mark=at position .5 with {\arrow{>}}},postaction={decorate}}}

\theoremstyle{plain}
  \newtheorem{thm}{Theorem}
  
  \newtheorem{prop}[thm]{Proposition}
  
  \newtheorem{cor}[thm]{Corollary}
  \newtheorem{conjecture}[thm]{Conjecture}
  \newtheorem{lemma}[thm]{Lemma}
\theoremstyle{definition}

\numberwithin{thm}{section}



\newcommand{\p}{\partial}

\newcommand{\Z}{{\mathbb{Z}}}


\newcommand{\Graphs}{{\mathsf{Graphs}}}


\newcommand{\FreeLie}{\mathrm{FreeLie}}

\newcommand{\op}{\mathcal}

\newcommand{\Lie}{\mathsf{Lie}}

\newcommand{\Com}{\mathsf{Com}}

\newcommand{\bpm}{\begin{pmatrix}}
\newcommand{\epm}{\end{pmatrix}}

\newcommand{\bbS}{\mathbb{S}}

\DeclareMathOperator{\Aut}{Aut}

\DeclareMathOperator{\sgn}{sgn}
\DeclareMathOperator{\gr}{gr}


\newcommand{\MM}{{\mathcal M}}

\newcommand{\Q}{\mathbb{Q}}



\newcommand{\STr}{\mathrm{STr}}

\newcommand{\COp}{{\op C}}
\newcommand{\DOp}{{\op D}}

\newcommand{\BOp}{{\op B}}
\newcommand{\AOp}{{\op A}}

\newcommand{\Ind}{{\mathrm{Ind}}}

\newcommand{\Seq}{{\mathsf{Seq}}}
\newcommand{\opOne}{{\mathbf{1}}}

\newcommand{\Pois}{{\mathsf{Pois}}}
\newcommand{\BVGraphs}{{\mathsf{BVGraphs}}}

\newcommand{\BV}{{\mathsf{BV}}}

\newcommand{\fHHGC}{\ftX{}}
\newcommand{\DS}{\Delta_0}
\newcommand{\cro}{\mathrm{cr}}

\newcommand{\tX}{\check X}
\newcommand{\ftX}{\widecheck{fX}}

\newcommand{\tG}{\check G}
\newcommand{\ftG}{\widecheck {fG}}
\newcommand{\rd}{{\mathrm{red}}}
\newcommand{\croBV}{\BV^\rd}
\newcommand{\croBVGraphs}{\BVGraphs^\rd}
\newcommand{\croGraphs}{\Graphs^\rd}

\author{Sam Payne}
\address{Department of Mathematics \\ 2515 Speedway, PMA 8.100 \\ Austin, TX 78722 USA}
\email{sampayne@utexas.edu}
\author{Thomas Willwacher}
\address{Department of Mathematics\\ ETH Zurich\\ R\"amistrasse 101 \\ 8092 Zurich, Switzerland}
\email{thomas.willwacher@math.ethz.ch}

\begin{document}
\title{The weight two compactly supported Euler characteristic of moduli spaces of curves}

\begin{abstract}
  We derive a formula for the generating function for the weight two compactly supported $\bbS_n$-equivariant Euler characteristics of the moduli spaces of curves $\MM_{g,n}$, using graph complexes and calculations inspired by operadic methods. 
\end{abstract}

 \thanks{
   S.P. has been supported by NSF grants DMS--2001502 and DMS--2053261. Portions of this work were carried out while SP was in residence at ICERM for the special thematic semester program on Combinatorial Algebraic Geometry. T.W. has been supported by the ERC starting grant 678156 GRAPHCPX, and the NCCR SwissMAP, funded by the Swiss National Science Foundation.}

\maketitle

\setcounter{tocdepth}{1}
   \tableofcontents

\section{Introduction}

The rational cohomology of moduli spaces of curves is of central interest not only in algebraic geometry but also in  topology, geometric group theory, and mathematical physics. In the stable range, the cohomology is completely understood, thanks to the celebrated proof of Mumford's conjecture via stable homotopy theory  \cite{MadsenWeiss07}.  Outside of the stable range, even the most basic properties, such as the parameters $k$, $g$, $n$ for which the rational cohomology $H^k(\MM_{g,n})$ is zero or non-zero, have remained largely mysterious \cite[\S 9]{Margalit19}.  

Algebraic geometry techniques have recently yielded new nonvanishing results and lower bounds on dimensions of cohomology groups in the unstable range \cite{CGP1, CGP2, PayneWillwacher}. The algebraic structure of $\MM_{g,n}$ as a Deligne-Mumford stack endows its rational cohomology with a mixed Hodge structure.  Nonvanishing results and lower bounds on dimensions of cohomology groups are then obtained by studying the associated graded of the weight filtration, one weight at a time. The first two nontrivial graded pieces of the weight filtration may be expressed in terms of the cohomology of commutative graph complexes including, in weight zero, the commutative graph complex originally studied by Kontsevich \cite{K3}  along with, in weight two, more general graph complexes closely related to those appearing in recent work on the embedding calculus \cite{FTW2}.

The cohomology groups of the graph complexes that arise in this context are far from fully understood.  In such cases where cohomology groups cannot be computed directly, Euler characteristics often yield valuable insights. For instance, Harer and Zagier famously computed the Euler characteristics of $\MM_g$ and deduced that these moduli spaces have cohomology groups that grow superexponentially in both even and odd degrees \cite{HarerZagier86}.  This is especially striking since the stable cohomology grows subexponentially and, at the time of their writing and for nearly twenty years after, there were no known examples where $H^k(\MM_g)$ is nonzero for odd $k$. The first such example was $H^5(\MM_4)$ \cite{Tommasi05}.  All other known examples are proved using the associated graded of the weight filtration and graph complex techniques; see \cite{CGP1, PayneWillwacher}.

Even if one is primarily interested in the cohomology of $\MM_g$, for inductive arguments using the boundary structure of the moduli space of stable curves as in \cite{ArbarelloCornalba98, GK}, it is essential to understand the cohomology of moduli spaces of curves with marked points $\MM_{g,n}$, and to understand these not only as vector spaces but as representations of the symmetric group $\bbS_{n}$, with the action induced by permuting marked points.  Thus, one is naturally led to investigate the $\bbS_{n}$-equivariant Euler characteristic of the associated graded of the weight filtration on the rational cohomology of $\MM_{g,n}$.  By Poincar\'e duality \cite[Theorem~6.3]{PetersSteenbrink}, it is equivalent to study the associated graded of the weight filtration on compactly supported cohomology.  

A closed formula for the weight 0 compactly supported Euler characteristic of moduli spaces of curves is given in \cite{CFGP}, and the weight 1 compactly supported cohomology vanishes.  Our main result is a formula for the generating function for the $\bbS_n$-equivariant Euler characteristic of the weight 2 compactly supported cohomology of $\MM_{g,n}$.

\subsection{Statement of main result} Let $V_\lambda$ denote the irreducible representation of the symmetric group corresponding to a Young diagram $\lambda$. For any graded $\Q$-vector space $W=\oplus_k W^k$ with finite dimensional graded pieces and an action of the symmetric group $\bbS_n$, we consider the decomposition into isotypical components
\[
  W^k \cong \bigoplus_\lambda V_\lambda \otimes \Q^{n_{\lambda,k}}.
\]
Then the equivariant Euler characteristic of $W$ is defined to be the symmetric function
\[
\chi^\bbS(W)
=
\sum_{k,\lambda} (-1)^k n_{\lambda,k} s_\lambda,
\]
with $s_\lambda$ the Schur function corresponding to $\lambda$.

We are particularly interested in the associated graded of the weight filtration 
$$
\gr_i H^\bullet_c (\MM_{g,n}) := W_i H_c^\bullet(\MM_{g,n}) / W_{i-1} H^\bullet_c(\MM_{g,n}),
$$
which carries a natural $\bbS_n$-action induced by permuting the marked points.  Let
$$
\chi_{i}^{\bbS}(\MM_{g,n}):= \chi^{\bbS}(\gr_{i} H_c^\bullet(\MM_{g,n})).
$$
We will give a formula for the generating function for the weight $2$ equivariant Euler characteristics
\begin{equation}\label{equ:omega2def}
\omega_2 := 
\sum_{g,n\atop 2g+n\geq 3} 
    \chi^{\bbS}_2 (\MM_{g,n})\hbar^g
\end{equation} 

\noindent in the ring of formal symmetric power series $\Lambda=\Q\llbracket p_1,p_2,\dots \rrbracket$.  Here $p_\ell = x_1^\ell + x_2^\ell + x_3^\ell + \cdots$ is the degree $\ell$ power sum.  Let  $P_\ell:=1+p_\ell$ be the inhomogeneous power sum.  We define 
\begin{align}\label{equ:Zelldef}
  Z_\ell &:= \frac 1 \ell \sum_{d\mid \ell} \mu(\ell/d) \frac {P_d } {\hbar^d},
\end{align}
where $\mu$ is the M\"obius function and $\hbar$ is a formal variable that will count the genus, as well as 
\begin{equation}
  \label{equ:psi def}
  \psi_0(z) := -\sum_{j=1}^\infty \frac{B_j}{j} \frac 1 {(-z)^j} \mbox{ \ \ \ and \ \ \ }
  \psi_1(z) := -\sum_{j=0}^\infty B_j \frac 1 {(-z)^{j+1}}.
\end{equation}
Note that $\psi_0(z)+\log z$ and $\psi_1(z)$ are the asymptotic expansions as $z\to \infty$ of the digamma and trigamma function respectively.
We identify the logarithm with its power series expansion 
\[
  \log(1-z) = -\sum_{j=1}^\infty z^j.
\]

\begin{thm}\label{thm:eulerchar}
The generating function for $\bbS$-equivariant weight $2$ compactly supported Euler characteristics of moduli spaces of curves is 
\begin{equation} \label{equ:eulerchar}
\resizebox{.95\hsize}{!}{
$\begin{aligned}
    \omega_2 = \frac{(\hbar-1) P_1 }{2} & \left[
      \left(  -\frac \hbar {P_1} + \sum_{\ell\geq 1} \frac{\mu(\ell)}{\ell}\Big(\log(\ell \hbar^\ell Z_\ell) +\psi_0(-Z_\ell)\Big) \right)^2
      + \sum_{\ell\geq 1}  \frac{\mu(\ell)}{\ell} \Big( \log(2\ell\hbar^{2\ell}Z_{2\ell}) +\psi_0(-Z_{2\ell}) \Big) 
  +\sum_{\ell\geq 1} \left(\frac{\mu(\ell)}{\ell}\right)^2 \psi_1(-Z_\ell)
  -\frac{\hbar^2}{P_1^2}
    \right] + \cdots  \\ 
  & \cdots 
    -\hbar +(\hbar^2-1) P_1 + \frac 1 2 \left(P_1^2+P_2\right).
    \end{aligned}$
    }
\end{equation}
\end{thm}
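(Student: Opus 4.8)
The plan is to first replace the topological quantity $\chi_2^\bbS(\MM_{g,n})$ by a purely combinatorial one. By the identification recalled above (following \cite{PayneWillwacher, CGP1}), the graded piece $\gr_2 H_c^\bullet(\MM_{g,n})$ is computed by the cohomology of an explicit graph complex whose generators are graphs of genus $g$ carrying $n$ externally labelled hairs, on which $\bbS_n$ acts by permuting the labels and the weight-two condition cuts out a specific combinatorial subclass (a small decoration/defect on top of the weight-zero Kontsevich-type complex). Since the $\bbS_n$-equivariant Euler characteristic of a bounded complex of $\bbS_n$-representations equals that of its cohomology, I may evaluate $\omega_2$ at the chain level, so that no knowledge of the actual cohomology is required.

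Next I would express $\omega_2$ as a generating series summed over isomorphism classes of these decorated graphs. Each class contributes a term $\pm \tfrac{1}{|\Aut|}$ times the symmetric function recording the induced $\bbS_n$-action on its hairs, weighted by $\hbar^g$ to track genus; the sign comes from the homological degree and the orientation data of the complex, and the automorphism factor enters because we work with labelled hairs modulo graph symmetries. The inhomogeneous power sums $P_\ell = 1+p_\ell$ should appear precisely here: each potential hair slot on a vertex is either empty or occupied by a labelled marked point, and summing over the two possibilities in the power-sum basis produces the $1+p_\ell$. The operadic structure then lets one reduce the enumeration to a few basic building blocks (a handle/loop contributing the genus and chains or cycles of vertices carrying hairs).

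I would then carry out the resummation. Disconnected graphs exponentiate, so the series over all graphs is the plethystic exponential of the connected series; passing back to connected contributions is a plethystic logarithm, which in the power-sum basis is exactly the Möbius inversion $\tfrac1\ell\sum_{d\mid\ell}\mu(\ell/d)(\,\cdot\,)$ that defines $Z_\ell$ out of $P_d/\hbar^d$. The infinite families obtained by varying the number of vertices along a chain or cycle are summed using the Euler--Maclaurin expansion, whose coefficients are the Bernoulli numbers $B_j$; this is the source of the digamma and trigamma expansions $\psi_0$ and $\psi_1$, evaluated at the argument $-Z_\ell$. The quadratic structure $\bigl(\cdots\bigr)^2$ reflects configurations with two independent attachment ends (equivalently a handle joined to the rest at two points), while the linear $\psi_0$ and $\psi_1$ terms record the single-end and the self-pairing contributions respectively. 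Finally I would reconcile the small-$g$, small-$n$ cases excluded from the clean resummation to recover the additive correction $-\hbar+(\hbar^2-1)P_1+\tfrac12(P_1^2+P_2)$.

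The hardest part will be the resummation in the third step: keeping exact track of the signs and of the symmetry factors $1/|\Aut|$ for the cyclically and dihedrally symmetric graphs, so that the Möbius sums $Z_\ell$ emerge with the correct multiplicities, and simultaneously matching the Euler--Maclaurin tails to the precise normalizations of $\psi_0$ and $\psi_1$ with argument $-Z_\ell$. Getting the interface between the plethystic logarithm (which governs connectivity) and the Bernoulli resummation (which governs the vertex count) to produce exactly the arguments and prefactors in \eqref{equ:eulerchar} is where the bookkeeping is most delicate, and it is also where a sign or symmetry error would be easiest to make.
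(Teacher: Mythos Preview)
Your proposal has the right broad outline---reduce to a graph complex, compute its Euler characteristic combinatorially, and identify the special functions that organize the answer---but several of the intermediate explanations do not match the actual structure of the weight-two graph complex, and the resummation step as you describe it would not go through.

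First, the complex $X_{g,n}$ is not just the weight-zero complex with ``a small decoration/defect.'' A generator carries, in addition to the $n$ numbered legs, an arbitrary number of legs labeled $\epsilon$ and \emph{exactly two} legs labeled $\omega$; the differential $\delta_{join}$ joins $\epsilon$- and $\omega$-legs and is not a minor perturbation of the weight-zero differential. The squared bracket $(\cdots)^2$ in the formula is not a handle glued at two points: it is the second derivative in $w$ of a generating function in which $w$ counts $\omega$-legs, i.e., it is the symmetric square coming from the two $\omega$-decorations. Likewise the linear $\psi_0$ summand is not a ``single-end'' term; it records the contribution of the second $w$-derivative through the factors where $w$ enters via $p_2\circ w=w^2$. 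If you try to organize the graph sum by ``handles'' and ``chains/cycles'' you will not see this structure.

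Second, your mechanism for producing $\psi_0$ and $\psi_1$---an Euler--Maclaurin resummation over the number of vertices along a chain---is not what drives the computation. In the paper the digamma and trigamma series arise from the Stirling expansion of $\log\Gamma$, after one proves the key identity
\[
(1+\partial_a)^{X} e^{-a}(1+\lambda_\ell a)^{E_\ell}\big|_{a=0}
\;=\;
\frac{(-\lambda_\ell)^X\,\Gamma(-E_\ell+X)}{\Gamma(-E_\ell)},
\]
(due to Turchin, and proved by checking it for $X\in\Z_{\geq 0}$). This identity is the step that converts the pairing $\langle\chi^\bbS(\AOp\circ\Lie_0),\chi^u(\croBV_0)\rangle$ into a closed expression; the subsequent $\psi_0,\psi_1$ are then just $\partial_X\log\Gamma$ and $\partial_X^2\log\Gamma$. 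There is no direct Euler--Maclaurin step, and it is not clear your proposed chain/cycle summation produces the argument $-Z_\ell$ with the correct M\"obius weights.

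More broadly, the paper does not proceed by P\'olya counting at all. It first enlarges $X_{g,n}$ to a disconnected version $\ftX$ and identifies $\ftX\cong(\DS\boxtimes\Com_1\boxtimes\Com_1)\otimes_\bbS\ftG$; then it uses an operadic identity (Koszul duality of $\Com$ together with Kontsevich's theorem $\BV_0\simeq\BVGraphs_0$) to rewrite $\chi^u(\AOp\otimes_\bbS\ftG)=\langle\chi^\bbS(\AOp)\circ\chi^\bbS(\Lie_0),\chi^u(\croBV_0)\rangle$. This is what makes the problem computable in closed form. The passage from $\ftX$ back to $\tX$ and then $X$ is done by dividing out the disconnected piece $\ftG$, extracting the $w^2$-coefficient, and subtracting off the forbidden $(\epsilon,\omega)$- and $(\omega,\omega)$-edges; your final correction step is correct in spirit, but the corrections are $-\frac12(P_1^2+P_2)$ for the unstable $(g,n)=(0,2)$ plus $\hbar p_1$ for $(g,n)=(1,1)$, not a catalogue of small graphs.

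In short: the approach you sketch (automorphism-weighted graph counting) is the \emph{other} standard toolset, and the paper even remarks in \S\ref{sec:terms discussion} that it explains the monomial structure of the answer term-by-term. But to get the closed formula you would still need a substitute for the $\Gamma$-function identity and for the operadic reduction $\ftG\rightsquigarrow\croBV_0$, neither of which your plan supplies.
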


Let us discuss the convergence and structure of the formula.
First note that 
\begin{equation}\label{equ:Xell}
\frac 1 {Z_\ell} = \frac{\ell \hbar^\ell}{P_\ell(1+X_\ell)} =
\frac{\ell \hbar^\ell}{P_\ell}\sum_{j\geq 0}(-X_\ell)^j
=O(\hbar^\ell)
\quad\quad
\text{with }
\quad \quad
X_\ell := \sum_{d\mid \ell \atop d\neq \ell} \mu(\ell/d) \frac {\hbar^{\ell-d} P_d } {P_\ell} = O(\hbar^{\frac \ell2})\, .
\end{equation}
We can hence expand the terms within the square brackets in \eqref{equ:eulerchar} as power series in $\hbar$.
First,
\begin{align} \label{equ:psi0expanded}
  \log(\ell \hbar^\ell Z_\ell) +\psi_0(-Z_\ell)
  &=
  \log P_\ell 
  -
  \sum_{j\geq 1} \frac{ (-1)^j}{j} X_\ell^j
  -
  \sum_{k=1}^\infty\sum_{j=0}^\infty 
  \frac{B_k}{k} 
  { -k \choose j }
  \left(\frac{\ell \hbar^\ell}{P_\ell}\right)^k
  X_\ell^j
  \\ 
  \label{equ:psi1expanded}
  \psi_1(-Z_\ell) 
  &= 
  -\sum_{k=1}^\infty \sum_{j=0}^\infty B_{k-1} 
  { -k \choose j }
  \left(\frac{\ell \hbar^\ell}{P_\ell}\right)^{k}
  X_\ell^j.
\end{align}

Note in particular that, with the exception of the term $\log P_\ell$, the expressions \eqref{equ:psi0expanded} and \eqref{equ:psi1expanded} are (infinite) linear combinations of Laurent monomials of the form
\begin{equation}\label{equ:P monomial}
\hbar^a \frac{P_{d_1}^{b_1}P_{d_2}^{b_2}\cdots P_{d_s}^{b_s}}{P_\ell^c}  \mbox{ \ \ such that \ \ } \left\{\begin{array}{l} d_i | \ell,  \mbox{ for } 1 \leq i \leq s, \\ b_1 + \cdots + b_s \leq c, \\ a+ b_1d_1 + \cdots + b_sd_s = \ell c. \end{array} \right.
\end{equation}
In particular, if we say that $P_d$ has degree $d$, then the coefficient of $\hbar^a$ for $a>0$ in each of the expressions \eqref{equ:psi0expanded} and \eqref{equ:psi1expanded} is a finite linear combination of Laurent monomials of degree $-a$ in the inhomogeneous power sum symmetric functions, each of the special form prescribed by  \eqref{equ:P monomial}.  
Inserting \eqref{equ:psi0expanded} and \eqref{equ:psi1expanded} into \eqref{equ:eulerchar} we find 
\begin{equation} \label{equ:eulerchar expanded}
  \resizebox{.95\hsize}{!}{
  $\begin{aligned}
      \omega_2 = \frac{(\hbar-1) P_1 }{2} & \left[
        \left(  -\frac \hbar {P_1} + 
        \sum_{\ell\geq 1} \frac{\mu(\ell)}{\ell} \log P_\ell 
        +
        \sum_{\ell\geq 1} \frac{\mu(\ell)}{\ell}
        \left(
  -
  \sum_{j\geq 1} \frac{ (-1)^j}{j} X_\ell^j
  -
  \sum_{k=1}^\infty\sum_{j=0}^\infty 
  \frac{B_k}{k} 
  { -k \choose j }
  \left(\frac{\ell \hbar^\ell}{P_\ell}\right)^k
  X_\ell^j
        \right) \right)^2
\right. 
  \\ &
\left.
  + \sum_{\ell\geq 1}  \frac{\mu(\ell)}{\ell} \log P_{2\ell}
  + 
  \sum_{\ell\geq 1}  \frac{\mu(\ell)}{\ell}
  \left( 
    -
  \sum_{j\geq 1} \frac{ (-1)^j}{j} X_{2\ell}^j
  -
  \sum_{k=1}^\infty\sum_{j=0}^\infty 
  \frac{B_k}{k} 
  { -k \choose j }
  \left(\frac{2\ell \hbar^{2\ell}}{P_{2\ell}}\right)^k
  X_{2\ell}^j
    \right)
\right.
\\&
\left.
    -
    \sum_{\ell\geq 1} \left(\frac{\mu(\ell)}{\ell}\right)^2 
    \sum_{k=1}^\infty \sum_{j=0}^\infty B_{k-1} 
  { -k \choose j }
  \left(\frac{\ell \hbar^\ell}{P_\ell}\right)^{k}
  X_\ell^j
       -\frac{\hbar^2}{P_1^2} \right] 
      -\hbar +(\hbar^2-1) P_1 + \frac 1 2 \left(P_1^2+P_2\right).
      \end{aligned}$
      }
  \end{equation}
By expanding the square and collecting terms, we arrive at the following equivalent version of Theorem \ref{thm:eulerchar}.

\begin{cor}\label{cor:eulerchar}
    Let $A_g$ and $C_g$ be the coefficients of $\hbar^g$ in the following power series, with $X_\ell$ as in \eqref{equ:Xell}$:$
    \begin{align}\label{equ:Agdef}
        \sum_g A_g \hbar^g
        :=& 
        -P_1 \left(  -\frac \hbar {P_1}
        +
        \sum_{\ell\geq 1} \frac{\mu(\ell)}{\ell}
        \left(
        -
        \sum_{j\geq 1} \frac{ (-1)^j}{j} X_\ell^j
        -
        \sum_{k=1}^\infty\sum_{j=0}^\infty 
        \frac{B_k}{k} 
        { -k \choose j }
        \left(\frac{\ell \hbar^\ell}{P_\ell}\right)^k
        X_\ell^j
        \right) \right);
\\
\label{equ:Cgdef}
        \sum_g C_g \hbar^g
        :=& 
        -\frac{P_1}2 \left[ 
        \left(  -\frac \hbar {P_1}
        +
        \sum_{\ell\geq 1} \frac{\mu(\ell)}{\ell}
        \left(
        -
        \sum_{j\geq 1} \frac{ (-1)^j}{j} X_\ell^j
        -
        \sum_{k=1}^\infty\sum_{j=0}^\infty 
        \frac{B_k}{k} 
        { -k \choose j }
        \left(\frac{\ell \hbar^\ell}{P_\ell}\right)^k
        X_\ell^j
        \right)
        \right)^2
        \right.
        \\ &
        \left.
          + 
          \sum_{\ell\geq 1}  \frac{\mu(\ell)}{\ell}
          \left( 
            -
          \sum_{j\geq 1} \frac{ (-1)^j}{j} X_{2\ell}^j
          -
          \sum_{k=1}^\infty\sum_{j=0}^\infty 
          \frac{B_k}{k} 
          { -k \choose j }
          \left(\frac{2\ell \hbar^{2\ell}}{P_{2\ell}}\right)^k
          X_{2\ell}^j
            \right)
        \right.
        \nonumber  \\ \nonumber &
        \left.
            -
            \sum_{\ell\geq 1} \left(\frac{\mu(\ell)}{\ell}\right)^2 
            \sum_{k=1}^\infty \sum_{j=0}^\infty B_{k-1} 
          { -k \choose j }
          \left(\frac{\ell \hbar^\ell}{P_\ell}\right)^{k}
          X_\ell^j
               -\frac{\hbar^2}{P_1^2}
        \right]\, .
        \end{align}
Then $A_g$ is a finite linear combination of monomials
\[
\frac{P_{d_1}^{b_1}P_{d_2}^{b_2}\cdots P_{d_s}^{b_s}}{P_{\ell}^{c} } \mbox{ \ \ such that \ \ } \left\{\begin{array}{l} d_i | \ell,  \mbox{ for } 1 \leq i \leq s, \\ b_1 + \cdots + b_s \leq c + 1, \\ g+ b_1d_1 + \cdots + b_sd_s = \ell c + 1. \end{array} \right.
\] 
Similarly, $C_g$ is a finite linear combinations of monomials
\[
\frac{P_{d_1}^{b_1}P_{d_2}^{b_2}\cdots P_{d_s}^{b_s}}{P_{\ell_1}^{c_1} P_{\ell_2}^{c_2} } \mbox{ \ \ such that \ \ } \left\{\begin{array}{l} d_i | \ell_1 \mbox{ or } d_i | \ell_2,  \mbox{ for } 1 \leq i \leq s, \\ b_1 + \cdots + b_s \leq c_1 + c_2 + 1, \\ g+ b_1d_1 + \cdots + b_sd_s = \ell_1 c_1 + \ell_2 c_2 + 1. \end{array} \right.
\]
Furthermore, the equivariant Euler characteristics $\chi_2^\bbS(\MM_{g,n})$ are expressed as follows:
\begin{align*}
    \chi^\bbS_2(\MM_{0,\bullet})
    &= 
    -\frac {P_1} 2 \left[ \left( \sum_{\ell\geq 1} \frac{\mu(\ell)}{\ell} \log P_\ell \right)^2
    + \sum_{\ell\geq 1} \frac{\mu(\ell)}{\ell} \log P_{2\ell} \right]
    -P_1
    +\frac 12 P_1^2 +\frac 1 2P_2
    \\
    \chi^\bbS_2(\MM_{1,\bullet})
    &=
    +\frac {P_1} 2 \left[ \left( \sum_{\ell\geq 1} \frac{\mu(\ell)}{\ell} \log P_\ell \right)^2
    + \sum_{\ell\geq 1} \frac{\mu(\ell)}{\ell} \log P_{2\ell} \right]
    +
    \left( \sum_{\ell\geq 1} \frac{\mu(\ell)}{\ell} \log P_\ell \right)
    A_1
    + C_1 -1
    \\
    \chi^\bbS_2(\MM_{2,\bullet})
    &=  
    \left( \sum_{\ell\geq 1} \frac{\mu(\ell)}{\ell} \log P_\ell \right)\left( A_2-A_{1} \right)
     +  C_2 -C_1 + P_1 \\
    \chi^\bbS_2(\MM_{g,\bullet})
     &=  
     \left( \sum_{\ell\geq 1} \frac{\mu(\ell)}{\ell} \log P_\ell \right)\left( A_g-A_{g-1} \right)
      +  C_g -C_{g-1} \quad\quad\text{for $g\geq 3$}
  \end{align*}
\end{cor}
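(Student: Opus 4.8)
The statement is a purely formal consequence of the already-established expansion \eqref{equ:eulerchar expanded}, obtained by separating the two logarithmic contributions from the rest. Accordingly, I would first set
\[
L := \sum_{\ell\geq 1}\frac{\mu(\ell)}{\ell}\log P_\ell,
\qquad
L_2 := \sum_{\ell\geq 1}\frac{\mu(\ell)}{\ell}\log P_{2\ell},
\]
and let $R$, $R_2$ and $S$ denote the three non-logarithmic sums in \eqref{equ:eulerchar expanded}: namely $R$ is the expression squared in the first line minus $L$, so that line reads $(L+R)^2$; the second line is $L_2 + R_2$; and the third line is $-S - \hbar^2/P_1^2$. Comparing directly with \eqref{equ:Agdef} and \eqref{equ:Cgdef} then gives the identifications
\[
\sum_g A_g\hbar^g = -P_1\,R,
\qquad
\sum_g C_g\hbar^g = -\frac{P_1}{2}\Big(R^2 + R_2 - S - \tfrac{\hbar^2}{P_1^2}\Big).
\]

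Next I would expand $(L+R)^2 = L^2 + 2LR + R^2$ inside the bracket of \eqref{equ:eulerchar expanded} and regroup it as $(L^2+L_2) + 2LR + (R^2+R_2-S-\hbar^2/P_1^2)$. Substituting $R = -\tfrac1{P_1}\sum_g A_g\hbar^g$ and $R^2+R_2-S-\hbar^2/P_1^2 = -\tfrac2{P_1}\sum_g C_g\hbar^g$ and multiplying through by $\tfrac{(\hbar-1)P_1}{2}$ yields
\[
\omega_2 = \frac{(\hbar-1)P_1}{2}\,(L^2+L_2) \;-\; (\hbar-1)L\sum_g A_g\hbar^g \;-\; (\hbar-1)\sum_g C_g\hbar^g \;+\; \Big(-\hbar+(\hbar^2-1)P_1+\tfrac12(P_1^2+P_2)\Big).
\]
Since $L$ and $L_2$ do not involve $\hbar$, extracting $[\hbar^g]\omega_2 = \chi_2^\bbS(\MM_{g,\bullet})$ is now mechanical: multiplication by $(1-\hbar)$ replaces $\sum_g A_g\hbar^g$ by $\sum_g(A_g-A_{g-1})\hbar^g$ and likewise for $C$, while $\tfrac{(\hbar-1)P_1}{2}(L^2+L_2)$ only contributes in degrees $0$ and $1$ and the explicit polynomial only in degrees $0,1,2$. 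One checks $A_0 = C_0 = 0$ — both hold because $X_\ell = O(\hbar^{\ell/2})$ (with $X_1=0$) and every other factor carries a positive power of $\hbar$, so $R$, $R_2$, $S$ and $R^2$ all vanish in degree zero — and then reads off the four displayed formulas case by case.

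For the structural claims on $A_g$ and $C_g$ I would build on the monomial description established around \eqref{equ:P monomial}. The coefficient of $\hbar^g$ in the $\ell$-th summand of $R$ is a finite linear combination of monomials $P_{d_1}^{b_1}\cdots P_{d_s}^{b_s}/P_\ell^c$ with $d_i\mid\ell$, $\sum_i b_i\leq c$, and $g+\sum_i b_i d_i=\ell c$; the estimates of \eqref{equ:Xell} force $\ell\leq 2g$ and bound the summation ranges of $j$ and $k$, so $[\hbar^g]R$ is a finite sum. Multiplying by $-P_1$ raises both $\sum_i b_i$ and $\sum_i b_i d_i$ by one, turning these into the constraints $\sum_i b_i\leq c+1$ and $g+\sum_i b_i d_i = \ell c+1$ claimed for $A_g$. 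For $C_g$ the only genuinely new ingredient is the convolution $R^2$: a product of two such monomials has denominator $P_{\ell_1}^{c_1}P_{\ell_2}^{c_2}$ with every $d_i$ dividing $\ell_1$ or $\ell_2$, and the two sets of inequalities and degree equalities add; the remaining pieces $R_2$ (with $\ell_1=2\ell$), $S$, and $\hbar^2/P_1^2$ are the degenerate case $c_2=0$. Multiplying by $-P_1/2$ again shifts the bounds by one, producing exactly the stated form.

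The computation is conceptually routine; the only real obstacle is the bookkeeping. One must verify that the three constraints of \eqref{equ:P monomial} transform correctly under both multiplication by $P_1$ and the convolution coming from $R^2$, and — more delicately — establish finiteness uniformly, confirming that for each fixed $g$ only finitely many indices $(\ell,j,k)$, respectively finitely many pairs of such indices, contribute a nonzero coefficient of $\hbar^g$. This finiteness rests entirely on the $O(\hbar^{\ell/2})$ and $O(\hbar^\ell)$ bounds of \eqref{equ:Xell}, and it is the step where a slip in the degree accounting would be easiest to overlook.
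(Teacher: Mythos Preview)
Your proposal is correct and follows exactly the approach the paper indicates: the paper derives the corollary simply ``by expanding the square and collecting terms'' in \eqref{equ:eulerchar expanded}, and your argument is a careful, detailed execution of precisely that expansion, together with the monomial bookkeeping already set up around \eqref{equ:P monomial}. There is nothing substantively different to compare.
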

We remark that $X_1=0$, and hence the terms involving positive powers of $X_1$ can be dropped from the sums above.  Furthermore, the individual terms appearing in the formulas for the Euler characteristics have natural interpretations in terms of graphs. See \S \ref{sec:terms discussion}, for details.
The first few Laurent polynomials $A_g$ and $C_g$ are: 
\begin{align*}
  A_1 &= -\frac{P_1^2-P_2}{2P_2}, \quad \quad 
  C_1 = \frac{P_1^2+P_2}{2P_2}, \quad \quad 
  A_2 = \frac 1{12P_1}-\frac{P_1^3}{4P_2^2}+\frac{P_1}{2P_2}-\frac{P_1^2}{3P_3}, \quad \quad 
  C_2 = \frac1{8P_1} +\frac{P_1^3}{8P_2^2}-\frac{P_1P_2}{4P_4},\\[1em]
  A_3 &= -\frac{P_1^4}{6P_2^3}+\frac{P_1^2}{2P_2^2}+\frac{P_1}{2P_3}+\frac{P_1P_3}{6P_6}, \quad \quad 
  C_3 = \frac{1}{24P_1^2}+\frac{P_1^4}{24P_2^3}+\frac{P_1^2}{8P_2^2}-\frac5{24P_2}+\frac{P_1}{3P_3}-\frac{P_1^3}{6P_2P_3}-\frac{P_1P_3}{6P_6}, \\[1em]
  A_4 &= -\frac1{120P_1^3}-\frac{P_1^5}{8P_2^4}+\frac{P_1^3}{2P_2^3}-\frac{P_1}{6P_2^2}-\frac{P_1^3}{6P_3^2}-\frac{P_1^2}{5P_5}+\frac{P_1P_2}{6P_6},
  \\
  C_4 &= -\frac1{288 P_1^3}+\frac{P_1^5}{96 P_2^4}+\frac{5P_1^3}{24P_2^3}-\frac{7P_1}{16P_2^2}-\frac{1}{24P_1P_2}-\frac{P_1^3}{18 P_3^2}-\frac{2}{9P_3}+\frac{P_1^4}{12P_2^2P_3}+\frac{5P_1^2}{12P_2P_3}
  -\frac{P_1P_2^2}{8P_4^2} \\&\quad+\frac{P_1}{2P_4}-\frac{P_1P_2}{6P_6}-\frac{P_3}{12P_6}+\frac{P_1^2P_3}{12P_2P_6},
  \\[1em]
  A_5 &= -\frac{P_1^6}{10 P_2^5}+\frac{P_1^4}{2 P_2^4}-\frac{P_1^2}{6 P_6}+\frac{P_1^2}{2
  P_3^2}-\frac{P_1^2}{3 P_2^3}+\frac{P_1}{2 P_5}+\frac{P_5 P_1}{10 P_{10}},
  \\
  C_5 &= -\frac{P_1^6}{240 P_2^5}-\frac{P_1^5}{18 P_2^3 P_3}-\frac{P_1^4}{12 P_2 P_3^2}+\frac{13
  P_1^4}{48 P_2^4}+\frac{7 P_1^3}{24 P_2^2 P_3}-\frac{P_1^3}{10 P_2 P_5}+\frac{P_3
  P_1^3}{24 P_2^2 P_6}+\frac{11 P_1^2}{36 P_6}+\frac{5 P_1^2}{12 P_3^2}-\frac{53 P_1^2}{72
  P_2^3} \\&\quad -\frac{P_1}{4 P_2 P_3}
  +\frac{P_1}{5 P_5}-\frac{P_3 P_1}{12 P_2 P_6}-\frac{P_5
  P_1}{10 P_{10}}-\frac{P_2}{12 P_6}+\frac{1}{24 P_2^2}-\frac{1}{24 P_3 P_1}-\frac{P_3}{72
  P_6 P_1}-\frac{1}{240 P_2 P_1^2}-\frac{1}{80 P_1^4}  
  \, .
\end{align*}


\subsection{Relations to the topological and weight zero Euler characteristics}

For fixed genus $g$, by work of Gorsky, the generating function $\sum_n \chi^\mathbb{S} (\MM_{g,n})$ is a finite sum of Laurent monomials of degree $2-2g$ in the inhomogeneous power sum symmetric functions $P_d$, each of whose denominators is a pure power $P_\ell^c$, satisfying conditions analogous to \eqref{equ:P monomial} \cite{Gorsky}.  The coefficients involve certain orbifold Euler characteristics $\chi^{orb}(\MM_{h,s})$, and these have natural expressions in terms of Bernoulli numbers, by the work of Harer and Zagier \cite{HarerZagier86}. 

Gorsky's method of proof is beautiful and elementary: one stratifies $\MM_g$ according to the topological type of the action of the automorphism group and considers the action of automorphisms on the configuration spaces of $n$ points. Each topological type of a curve with an automorphism contributes to precisely one monomial of the special form described above.  The final formula is then deduced using the basic properties of compactly supported topological Euler characteristic, namely that it is additive for stratifications and multiplicative for fibrations.

Weight graded Euler characteristics are more subtle than topological Euler characteristics.  In particular, although weight graded Euler characteristics of algebraic varieties and Deligne-Mumford stacks are additive with respect to stratifications, they are not multiplicative with respect to algebraic fibrations (flat families), even when the fibers are smooth and proper.  Nevertheless, all of the essential ideas in Gorsky's computation can be adapted to compute the $\bbS_n$-equivariant weight zero Euler characteristic of $\MM_{g,n}$, using the interpretation of weight zero cohomology as the singular cohomology of a moduli space of stable tropical curves \cite{CFGP}. As a result, the weight zero Euler characteristic has a structure closely analogous to that of Gorsky's formula: for fixed genus, the generating function is a finite sum of monomials of degree $1-g$ in the inhomogeneous power sum symmetric functions $P_d$, each of whose denominators is a pure power, satisfying conditions analogous to \eqref{equ:P monomial}.  Roughly speaking, each pair of a graph with an automorphism contributes to precisely one monomial of the prescribed special form; see \cite[Proposition~3.2]{CFGP}.  In the final formula, the coefficients are given by certain orbifold Euler characteristics of spaces of graphs, which have natural expressions in terms of Bernoulli numbers; see \cite[Lemma~7.4]{CFGP}, which is proved by induction starting from the $n = 0$ case, due to Kontsevich \cite{Gerlits, K3}.  

We do not have such a topological interpretation for weight two cohomology of moduli spaces of curves, and the weight two Euler characteristic is of a somewhat different form. In particular, for fixed genus $g \geq 2$, the generating function $\sum_{n} \chi_2^{\bbS} (\MM_{g,n})$ is not a finite sum of Laurent monomials in the inhomogeneous power sum symmetric functions. Lacking a topological interpretation for the weight two Euler characteristic, we express it as the Euler characteristic of a complex of decorated graphs, and use basic properties of symmetric sequences and an operadic identity (Proposition~\ref{prop:iHom Gr}) to produce the formula for the all genus generating function stated as Theorem~\ref{thm:eulerchar}. Specializing to a fixed genus, we obtain an expression involving logarithmic terms as well as Laurent monomials in the inhomogeneous power sum symmetric functions, of two different degrees, with denominators that are products of either one or two pure powers (Corollary~\ref{cor:eulerchar}).  



\subsection{Numerical computations}

The generating function in Theorem~\ref{thm:eulerchar} can be implemented to compute the equivariant Euler characteristic of $\gr_2H_c^\bullet(\MM_{g,n})$ for small values of $g$ and $n$, as shown in Figure~\ref{fig:eulerchartable}.
\begin{figure}[h!]
\noindent\scalebox{.85}{
  \begin{tabular}{|g|M|M|M|M|M|M|} \hline \rowcolor{Gray} g,n & 0 & 1 & 2 & 3 & 4 & 5\\
  \hline
  0 & $ 0 $ & $ 0 $ & $ 0 $ & $ 0 $ & $ s_{4} $ & $ -s_{32} $ \\  
  \hline
   1 & $ 0 $ & $ s_{1} $ & $ 0 $ & $ 0 $ & $ -s_{211} $ & $ s_{311} + s_{32} + s_{41} $ \\  
  \hline
   2 & $ 0 $ & $ 0 $ & $ 0 $ & $ -s_{21} - s_{3} $ & $ s_{4} $ & $ -s_{2111} - s_{221} - s_{311} + 2s_{41} + 2s_{5} $ \\  
  \hline
   3 & $ 0 $ & $ s_{1} $ & $ s_{11} + s_{2} $ & $ -2s_{3} $ & $ 2s_{22} - s_{31} - 2s_{4} $ & $ -3s_{2111} - s_{221} - 6s_{311} + 2s_{32} + 2s_{5} $ \\  
  \hline
   4 & $ 0 $ & $ 0 $ & $ 0 $ & $ 2s_{111} $ & $ 3s_{1111} + s_{211} - 3s_{22} - 4s_{31} - 3s_{4} $ & $ 5s_{11111} + 2s_{2111} + 2s_{221} - 6s_{311} + s_{32} - s_{41} + 3s_{5} $ \\  
  \hline
   5 & $ 0 $ & $ s_{1} $ & $ s_{11} + 3s_{2} $ & $ 5s_{21} + 2s_{3} $ & $ -s_{1111} + 5s_{211} - 4s_{31} - 9s_{4} $ & $ 2s_{11111} + 10s_{2111} + 11s_{221} - 14s_{41} - 8s_{5} $ \\  
  \hline
   6 & $ 0 $ & $ -3s_{1} $ & $ -4s_{11} - s_{2} $ & $ -2s_{111} + 8s_{3} $ & $ 4s_{1111} + 8s_{211} - 6s_{22} + 10s_{31} + 3s_{4} $ & $ 6s_{11111} + 21s_{2111} - 4s_{221} + 15s_{311} - 26s_{32} - 22s_{41} - 19s_{5} $ \\  
  \hline
   7 & $ 0 $ & $ s_{1} $ & $ 2s_{11} $ & $ -7s_{111} + 4s_{21} + 3s_{3} $ & $ -15s_{1111} + 7s_{211} + 15s_{22} + 28s_{31} + 9s_{4} $ & $ -25s_{11111} + s_{2111} - 7s_{221} + 35s_{311} - 22s_{32} - 25s_{41} - 37s_{5} $ \\  
  \hline
   8 & $ -s_{} $ & $ -4s_{1} $ & $ -8s_{11} - 9s_{2} $ & $ -6s_{111} - 18s_{21} + 4s_{3} $ & $ -8s_{1111} - 28s_{211} - 10s_{22} + 24s_{31} + 42s_{4} $ & $ -19s_{11111} - 19s_{221} + 91s_{311} + 22s_{32} + 87s_{41} + 20s_{5} $ \\  
  \hline
   9 & $ 4s_{} $ & $ 9s_{1} $ & $ 8s_{11} + s_{2} $ & $ -2s_{111} + s_{21} - 17s_{3} $ & $ -44s_{1111} - 64s_{211} + 17s_{22} - 17s_{31} + 5s_{4} $ & $ -71s_{11111} - 80s_{2111} + 55s_{221} + 110s_{311} + 193s_{32} + 193s_{41} + 69s_{5} $ \\  
  \hline
   10 & $ -4s_{} $ & $ -9s_{1} $ & $ -6s_{11} - 10s_{2} $ & $ 12s_{111} - 46s_{21} - 29s_{3} $ & $ 38s_{1111} - 68s_{211} - 35s_{22} - 79s_{31} + 28s_{4} $ & $ 3s_{11111} - 227s_{2111} - 155s_{221} - 234s_{311} + 50s_{32} + 196s_{41} + 209s_{5} $ \\  
  \hline
  \end{tabular}
  }
\caption{\label{fig:eulerchartable} The equivariant Euler characteristic of $\gr_2H_c^\bullet(\MM_{g,n})$ for $g\leq 10$ and $n\leq 5$, computed using Mathematica and Sage from Theorem \ref{thm:eulerchar} and expressed in terms of Schur functions.}
\end{figure}

Specializing to $n = 0$, and writing out the terms for $g \leq 50$, we find that $\sum_g \chi_2(\MM_{g})\hbar^g$ is equal to:
\begin{equation*} 
\resizebox{.95\hsize}{!}{
$\begin{aligned}
&-\hbar ^8+4 \hbar ^9-4 \hbar ^{10}+4 \hbar ^{11}-7 \hbar ^{12}+8 \hbar ^{13}-16 \hbar
   ^{14}+10 \hbar ^{15}-10 \hbar ^{16}+28 \hbar ^{17}+17 \hbar ^{18}+94 \hbar ^{19}-12
   \hbar ^{20}-382 \hbar ^{21}
   -196 \hbar ^{22}+2181 \hbar ^{23}
   \\&
   -4304 \hbar ^{24}-43135
   \hbar ^{25}+50281 \hbar ^{26}
+737650 \hbar ^{27}-676300 \hbar ^{28}-13670125 \hbar
   ^{29}+13593898 \hbar ^{30}+301176776 \hbar ^{31}
 -303061590 \hbar ^{32}   \\& -7579395579 \hbar
   ^{33}+7577048267 \hbar ^{34}+215276816193 \hbar ^{35}
      -215186827379 \hbar
   ^{36}
-6867916230403 \hbar ^{37}+6867642573519 \hbar ^{38} \\&  +244534892486924 \hbar
   ^{39}-244525053194639 \hbar ^{40}-9660791088309245 \hbar ^{41}+9660393501146309 \hbar
   ^{42}
   +421316847700113027 \hbar ^{43}    \\& -421303860389448419 \hbar ^{44}-20188618782635720913
   \hbar ^{45}+20188137225679260098 \hbar ^{46}+1058435119750859223989 \hbar
   ^{47}
   \\&
   -1058415071189257113479 \hbar ^{48}-60478235401984833358102 \hbar
   ^{49}+60477318015911247931156 \hbar ^{50}+O\left(\hbar ^{51}\right)\, .
    \end{aligned}$
        }
\end{equation*}


\noindent For $n=0$ and $g\leq 200$, we display $|\chi_2(\MM_g)|$ on a logarithmic scale, along with a plot of the sign of $\chi_2(\MM_{g})$, which seems to be eventually periodic.  See Figure~\ref{fig:euler plots}.

\begin{figure}[h!]
\begin{tabular}{cc}
  \includegraphics[width=.45\textwidth]{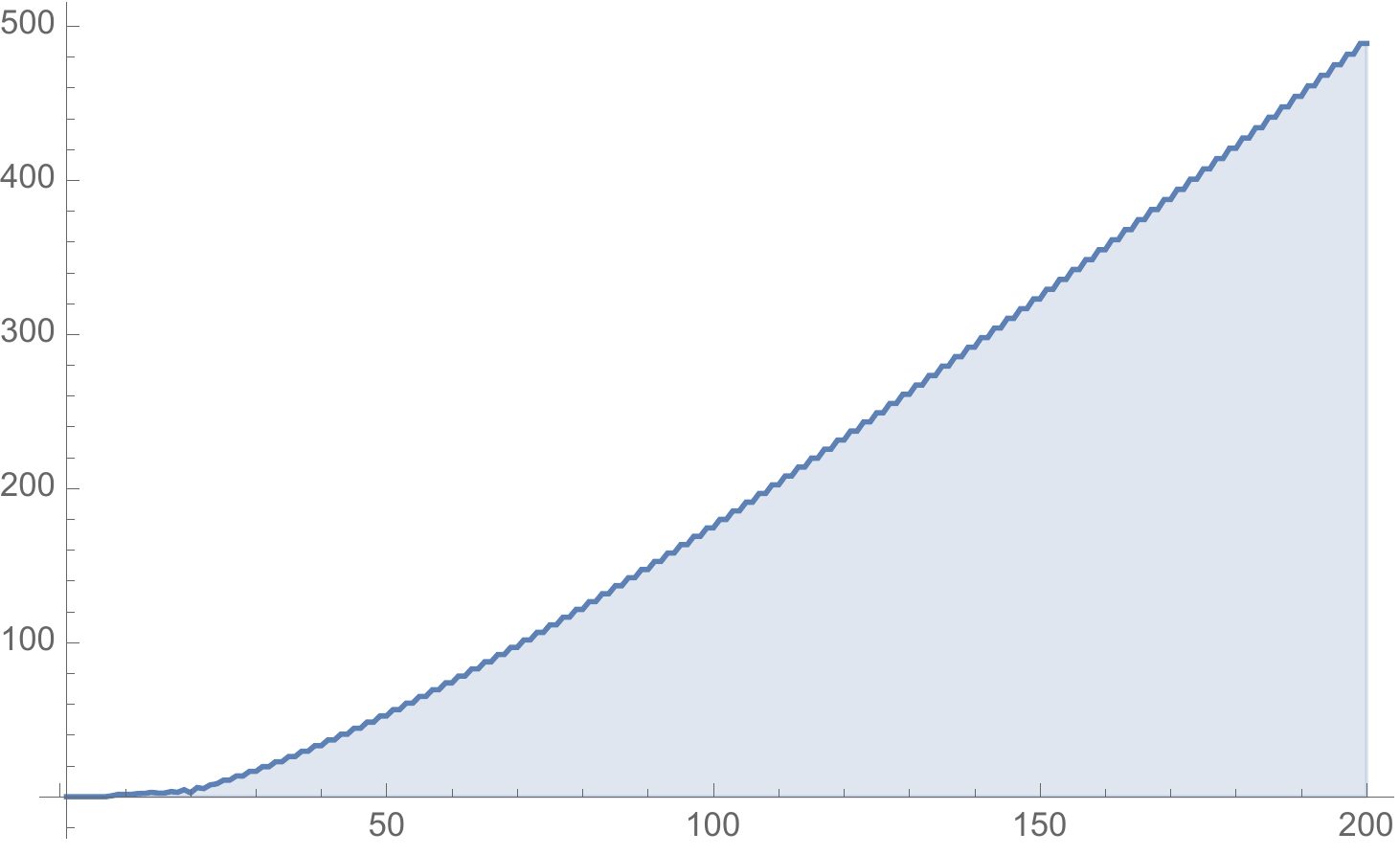}
&
  \includegraphics[width=.45\textwidth]{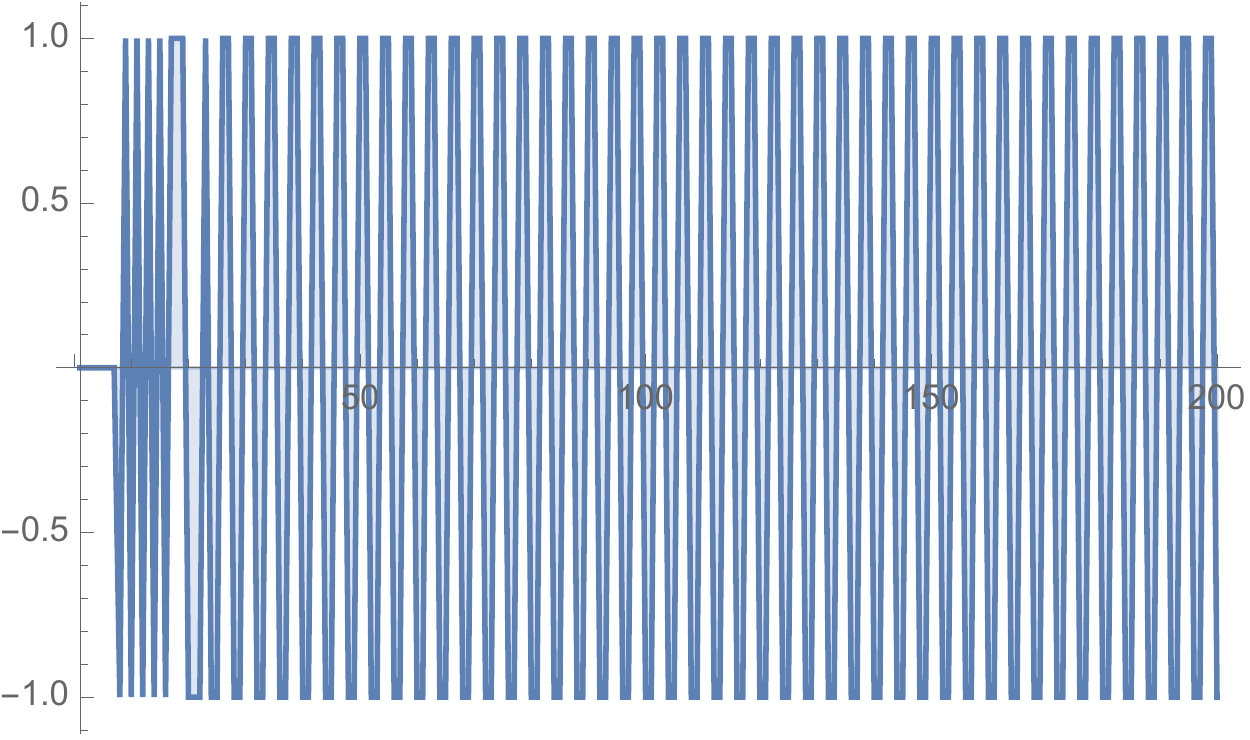}
\end{tabular}
\caption{\label{fig:euler plots} Plots of $\log(1+|\chi_2(\MM_{g})|)$ and the sign of $\chi_2(\MM_{g})$ for $g \leq 200$, on the left and right, respectively.}
\end{figure}

\begin{conjecture}
For $g \geq 23$, the sign of $\chi_2(\MM_g)$ is $-1$ for $g \equiv 0,1 \pmod 4$ and $+1$ for $g \equiv 2, 3 \pmod 4$.
\end{conjecture}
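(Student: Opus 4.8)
The plan is to specialize \eqref{equ:eulerchar} to $n=0$, isolate a single explicit summand that carries the full factorial (Bernoulli) growth, and show everything else is of strictly smaller order, so that the sign of $\chi_2(\MM_g)$ is dictated by the sign of one Bernoulli number. Setting all $p_\ell=0$, equivalently $P_\ell=1$, we have $X_1=0$ and $Z_1=1/\hbar$, so by \eqref{equ:psi def} the $\ell=1$ summand of $\frac{(\hbar-1)P_1}{2}\sum_\ell(\mu(\ell)/\ell)^2\psi_1(-Z_\ell)$ specializes to $\frac{\hbar-1}{2}\psi_1(-1/\hbar)=-\frac{\hbar-1}{2}\sum_{j\ge0}B_j\hbar^{j+1}$. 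Writing $\sum_g\chi_2(\MM_g)\hbar^g=\frac{\hbar-1}{2}\psi_1(-1/\hbar)+R(\hbar)$, I would first record
\[
[\hbar^g]\,\tfrac{\hbar-1}{2}\psi_1(-1/\hbar)=\tfrac12\bigl(B_{g-1}-B_{g-2}\bigr),
\]
which, since $B_{2j+1}=0$ for $j\ge1$, equals $+\tfrac12 B_{g-1}$ for $g$ odd and $-\tfrac12 B_{g-2}$ for $g$ even; in both cases this is $\pm\tfrac12 B_{2m}$ with $2m$ the largest even integer below $g$, i.e.\ $m=\lfloor(g-1)/2\rfloor$.

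Granting that $R$ contributes only lower-order terms, the sign of $\chi_2(\MM_g)$ equals that of $\pm\tfrac12 B_{2m}$. Using $\sgn(B_{2m})=(-1)^{m+1}$ and checking the four residue classes of $g$ modulo $4$ reproduces exactly the claimed pattern: the leading sign is negative for $g\equiv0,1$ and positive for $g\equiv2,3$. This proves the conjecture at the level of leading asymptotics, and it explains the observed near-equality $|\chi_2(\MM_{2m+1})|\approx|\chi_2(\MM_{2m+2})|\approx\tfrac12|B_{2m}|$ in the displayed data, since $g=2m+1$ and $g=2m+2$ read off the same Bernoulli number.

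The substance is to bound $R(\hbar)$, whose contributions are: the squared bracket $\frac{\hbar-1}{2}u^2$ with $u=-\hbar+\sum_\ell\frac{\mu(\ell)}{\ell}\bigl(\log(\ell\hbar^\ell Z_\ell)+\psi_0(-Z_\ell)\bigr)$; the $Z_{2\ell}$-sum; the $\ell\ge2$ tail of the $\psi_1$-sum; and polynomial terms. At $P_\ell=1$ the $Z_{2\ell}$- and $\ell\ge2$-terms only ever produce Bernoulli numbers of index at most $\lfloor g/2\rfloor$, hence are $O(|B_{\lfloor g/2\rfloor}|)$ and negligible against $|B_{2m}|$. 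The delicate term is $u^2$, and here the decisive structural input is $u_0=u_1=0$: the constant term vanishes, and $A_1=-\tfrac{P_1^2-P_2}{2P_2}$ from Corollary~\ref{cor:eulerchar} gives $A_1|_{P=1}=0$, so the coefficient of $\hbar$ in $u$ vanishes as well. Since $B_i=0$ for odd $i\ge3$, the coefficient $u_i$ carries a top-order Bernoulli $\sim-B_i/i$ only for even $i$; combined with $u_0=u_1=0$, this forces every Bernoulli-carrying factor in the convolution $u^2=\sum_i u_iu_{g-i}$ to pair with a factor of $\hbar$-degree $\ge2$, pushing the largest surviving Bernoulli index down to at most $2m-2$. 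Consequently $[\hbar^g]\frac{\hbar-1}{2}u^2=O(|B_{2m-2}|)$, one even step below the dominant term.

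It remains to make this effective. I would use $|B_{2j}|=\frac{2(2j)!}{(2\pi)^{2j}}\zeta(2j)$, so that $|B_{2m}|/|B_{2m-2}|\ge\frac{(2m)(2m-1)}{4\pi^2\zeta(2)}$, which for $g\ge23$ (so $2m\ge22$) already exceeds $5$; it then suffices to bound the effective constant $C$ in $|[\hbar^g]\frac{\hbar-1}{2}u^2|\le C|B_{2m-2}|$ below $\tfrac12\cdot\frac{(2m)(2m-1)}{4\pi^2\zeta(2)}$. Concretely, one estimates the Bernoulli-weighted convolution $\sum_t|u_{2t}|\,|u_{g-2t}|$ using super-multiplicativity bounds for $(2a)!(2b)!/(2a+2b)!$ to show the tail is dominated, up to an explicit factor, by its outermost term $|u_3u_{g-3}|\sim\frac{2|B_{2m-2}|}{2m-2}$. \emph{The main obstacle} is precisely this explicit control of the convolution tail of $u^2$ uniformly in $g$, together with pinning the threshold to exactly $g\ge23$: the clean inequality holds comfortably once $g$ is moderately large, and the finitely many remaining cases $23\le g\le G_0$ are then settled by direct evaluation of \eqref{equ:eulerchar expanded}, consistent with the sign plot in Figure~\ref{fig:euler plots}.
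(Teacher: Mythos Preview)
The statement you are trying to prove is labeled a \emph{Conjecture} in the paper; the authors offer no proof, only the numerical evidence of Figure~\ref{fig:euler plots}. There is thus no ``paper's own proof'' to compare against, and your proposal is an attempt to go beyond what the paper establishes.

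Your asymptotic heuristic is the natural one and the leading-term computation is correct: at $p_\ell=0$ the $\ell=1$ summand of the $\psi_1$-block contributes $\tfrac12(B_{g-1}-B_{g-2})$ to $[\hbar^g]\omega_2$, and its sign reproduces the conjectured $\pmod 4$ pattern. The verification that $u_0=u_1=0$ (via $A_1|_{P=1}=0$) is also right.

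There is, however, an error in your control of the square term. For even $g$ the convolution $[\hbar^g]u^2$ contains $2u_2u_{g-2}$, and since $g-2=2m$ is even, $u_{g-2}$ carries $-B_{g-2}/(g-2)=-B_{2m}/(2m)$ at top order. So the Bernoulli index $2m$ \emph{does} survive in $u^2$, contrary to your claim that it is pushed down to $2m-2$. What actually makes this term subdominant is not a drop in Bernoulli index but the extra prefactor: the contribution is of size $|B_2|\cdot|B_{2m}|/(2\cdot 2m)\sim |B_{2m}|/(12m)$, one order in $g$ below the main term $\tfrac12|B_{2m}|$. Your estimate should be rewritten accordingly; the super-multiplicativity argument you sketch for the interior of the convolution is then still the right shape, but the endpoint terms must be handled as above rather than dismissed.

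Even after this correction, what you have is a plausible strategy, not a proof. The steps you flag as ``the main obstacle'' are genuine obstacles: one needs uniform effective bounds on all $\ell\ge 2$ contributions to $u$ and to the remaining brackets (these involve infinitely many $\ell$ and the $X_\ell$ are themselves nontrivial series at $P=1$), an honest treatment of the full convolution rather than only its extremal terms, and an explicit constant that pins the threshold to $g\ge 23$. None of these are carried out, and the paper does not supply them either. The conjecture remains open; your argument gives good heuristic support and explains the observed near-equality $|\chi_2(\MM_{2m+1})|\approx|\chi_2(\MM_{2m+2})|\approx\tfrac12|B_{2m}|$, but it does not settle the statement.
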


\section{Preliminaries}

\subsection{Notation and conventions} 
We work over the rational numbers $\Q$. All vector spaces are understood to be $\Q$-vector spaces, and likewise all homology and cohomology groups are taken with $\Q$-coefficients. If $V$ is a vector space with the action of a finite group $G$, we identify the invariant subspace $V^G$ with the coinvariant space $V_G$, by averaging over the group action. The phrase differential graded is abbreviated dg.
For a dg vector space $V$ and an integer $k$, let $V[k]$ be the graded vector space obtained by shifting all degrees by $k$, i.e., the degree $j$ part of $V$ is the degree $j-k$ part of $V[k]$.

\subsection{Symmetric sequences and products}\label{sec:symm seq}
A symmetric sequence is a collection $\COp = \{\COp(r)\}_{r\geq 0}$ of (right) $\bbS_r$-modules in dg vector spaces. 
Similarly, a bisymmetric sequence is a collection $\BOp = \{\BOp(r,s)\}_{r,s\geq 0}$ of $\bbS_r\times \bbS_s$-modules. We say that $\COp(r)$ is the part of $\COp$ of \emph{arity} $r$, and $\BOp(r,s)$ is the part of $\BOp$ of \emph{arity} $(r,s)$.

The category of symmetric sequences $\Seq$ has several monoidal products. 
First, for two symmetric sequences $\COp, \DOp$ we have the symmetric sequence $\COp\otimes \DOp$, given by 
\[
 (\COp\otimes \DOp)(r) = \COp(r)\otimes \DOp(r),
\]
with the diagonal $\bbS_r$-action. Second, we have the symmetric sequence $\COp\boxtimes \DOp$ given by  
\[
   ( \COp\boxtimes \DOp)(r) = \oplus_{j+k=r} \Ind_{\bbS_j\times \bbS_k}^{\bbS_r}\COp(j)\otimes \DOp(k).
\]
Let us note in particular that the symmetric sequence
\[
    \COp^{\boxtimes k} = \underbrace{\COp\boxtimes\cdots\boxtimes \COp}_{k\times}
\]
carries an action of $\bbS_k$ by permuting the factors. In other words, $\{(\COp^{\boxtimes k})(r)\}_{k\geq 0,r\geq 0}$ is a bisymmetric sequence.

We then have a third monoidal product on symmetric sequences, the plethysm product $\DOp \circ \COp$, given by 
\[
(\DOp \circ \COp)(r) = \oplus_{k\geq 0} \DOp(k)\otimes_{\bbS_k} \COp^{\boxtimes k}(r).
\]
The category of symmetric sequences $\Seq$ with the plethysm product is monoidal, with unit $\opOne$ given by
\[
    \opOne(r) = \begin{cases}
        \Q &\text{if $r=1$}, \\
        0 &\text{otherwise.}
        \end{cases}
\]

Finally, given $\COp$ and $\DOp$ symmetric sequences of dg vector spaces we define the dg vector space 
\[
  \COp\otimes_\bbS \DOp := \bigoplus_{r\geq 0} \COp(r) \otimes_{\bbS_r} \DOp(r).
\]

\subsection{Equivariant Euler characteristics}
For $F$ an endomorphism of a finite dimensional graded vector space $V=\oplus_i V_i$, we denote by $\STr(F)$ the super-trace of $F$, that is, the alternating sum
\[
\STr(F) = \sum_{i} (-1)^i \mathrm{Tr}(F|_{V_i}).
\]
For $\sigma\in \bbS_r$ a permutation we denote by $i(\sigma)=(i_1,i_2,\dots)$ its cycle type, with $i_k$ the number of cycles of length $k$.
For $\rho$ a graded $\bbS_r$-representation on $V$, we denote the $\bbS_r$-equivariant Euler characteristic in $\Lambda =\Q\llbracket p_1,p_2,\dots\rrbracket$ by
\[
\chi^{\bbS}(V) = \frac 1 {r!} \sum_{\sigma \in \bbS_r} \STr(\rho(\sigma)) p^{i(\sigma)}, 
\]
 using the multi-index notation $p^{i(\sigma)}=p_1^{i_1(\sigma)}p_2^{i_2(\sigma)}\cdots$.
If $V$ has an additional (non-negative) grading with grading generator $G$ (i.e., where $G$ is equal to $k$ times the identity on the subspace of degree $k$) we use the notation 
\begin{equation}
  \label{equ:chiudef}
  \chi^u(\AOp)=  \frac 1 {r!} \sum_{\sigma \in \bbS_r} \STr(u^G \rho(\sigma)) p^{i(\sigma)} \in \Lambda\llbracket u \rrbracket.
\end{equation}
We freely generalize this notation to multiple gradings and other variables counting the multidegree. For example, if $V$ has an additional $\Z^2$-grading, with grading generators $G_1$, $G_2$, then we write
\[
  \chi^{u,v}(\AOp)=  \frac 1 {r!} \sum_{\sigma \in \bbS_r} \STr(u^{G_1} v^{G_2} \rho(\sigma)) p^{i(\sigma)} \in \Lambda\llbracket u,v \rrbracket,
\] 
and similarly for a $\Z^3$-grading.

For $\AOp$ a symmetric sequence such that $\AOp(r)$ is finite dimensional for each $r$, we write
\[
\chi^{\bbS}(\AOp) := \sum_{r\geq 0} \chi^{\bbS}(\AOp(r))
\]
and similarly for $\chi^u(\AOp)$, when $\AOp$ has an additional grading.  

The operations on symmetric sequences discussed in \S \ref{sec:symm seq} induce operations on symmetric functions. First, 
\[
\chi^\bbS(\AOp\boxtimes \BOp)= \chi^\bbS(\AOp) \chi^\bbS(\BOp).
\]
Second, there is a plethysm product $\circ$ on symmetric functions $\circ: \Lambda\otimes\Lambda\to \Lambda$ such that
\[
\chi^\bbS(\AOp\circ \BOp)= \chi^\bbS(\AOp) \circ \chi^\bbS(\BOp).  
\]
Concretely, the plethysm product on symmetric functions may be computed by the following rules:
\begin{equation}\label{equ:plethism euler 1}
\begin{aligned}
(f_1+f_2)\circ g &= f_1\circ g + f_2\circ g \quad \quad \quad \quad \quad \quad 
(f_1f_2)\circ g = (f_1\circ g)(f_2\circ g) \\
p_n\circ f &= f(p_n,p_{2n},p_{3n},\dots) 
\ \  \text{for $f=f(p_1,p_2,p_3,\dots)$}
\end{aligned}
\end{equation}
In the presence of an additional grading, encoded by a variable $u$ as in \eqref{equ:chiudef}, the last rule is replaced by 
\begin{align}\label{equ:plethism euler 2}
  p_n\circ f &= f(u^n, p_n,p_{2n},p_{3n},\dots) 
\ \  \text{for $f=f(u, p_1,p_2,p_3,\dots)$}
\end{align}
For more details we refer to \cite{Macdonald} or the concise introduction to symmetric functions in \cite{GK}.

On the subspace of $\Lambda$ consisting of symmetric polynomials (finite series), there is an inner product given by 
\begin{equation}\label{equ:prod def}
\langle p^i, p^j \rangle
= 
\delta_{ij} \prod_k k^{i_k} i_k!
= 
\prod_k \left( \left(\frac{\partial}{\partial p_k}\right)^{i_k} k^{j_k}p_k^{j_k}\right)_{p_k=0}
= 
\left( \prod_k \left(k\frac{\partial}{\partial p_k}\right)^{i_k} p^j\right)_{p_1=p_2=\cdots =0}.
\end{equation}
Here, $i=(i_1,\dots)$ and $j=(j_1,\dots)$ are multi-indices, $p^i:=p_1^{i_1}p_2^{i_2}\dots$, and $\delta_{ij}$ is the Kronecker symbol.
This inner product has the property that for $\AOp, \BOp$ finite dimensional symmetric sequences 
\begin{equation}\label{equ:tens prod}
  \chi\left(\AOp\otimes_\bbS \BOp\right) 
= 
  \langle\chi^\bbS(\AOp), \chi^\bbS(\BOp)  \rangle.
\end{equation}

\subsection{Examples of symmetric sequences and their equivariant Euler characteristics}

For later use we shall recall the Euler characteristics of some symmetric sequences from the literature.  

The commutative operad is a simple and natural symmetric sequence, which we consider in two variations:
\begin{align*}
\Com_1(r) &= \Q \quad \text{for $r\geq 0$},
&
\Com(r) &=
\begin{cases}
   \Q & \text{for $r\geq 1$}, \\
   0 & \text{for $r=0$}.
\end{cases},
\end{align*}
Here, $\Q$ denotes the trivial $\bbS_r$-representation.
The corresponding equivariant Euler characteristics are \cite[\S 5.1]{Getzler}:
\begin{align}\label{equ:euler Com}
    \chi^\bbS(\Com_1) &= \exp\left(\sum_{\ell}\frac{p_\ell}{\ell} \right) &
    \chi^\bbS(\Com) &= \exp\left(\sum_{\ell}\frac{p_\ell}{\ell}\right) -1.
\end{align}

The Lie operad is the symmetric sequence defined such that $\Lie(r)$ is the subspace of the free Lie algebra $\FreeLie(x_1,\dots,x_r)$ in $r$ letters consisting of linear combinations of Lie words in which each $x_j$ occurs exactly once.
We shall also need a degree shifted version $\Lie_0$ defined in the same way, except that the generators $x_j$ are considered of cohomological degree one, and one has an overall degree shift by $-1$, i.e.,
\[
  \Lie_0(r) = \Lie(r) \otimes \sgn_r[1-r].
\]
The corresponding equivariant Euler characteristics are
\begin{align}\label{equ:euler Lie}
\chi^\bbS(\Lie) &= - \sum_{\ell \geq 1} \frac{\mu(\ell)} \ell \log(1-p_\ell), &
\chi^\bbS(\Lie_0) &=  \sum_{\ell \geq 1} \frac{\mu(\ell)} \ell \log(1+p_\ell).
\end{align}
See \cite[Proposition 5.3 and Lemma 5.4]{Getzler}.

The Poisson operad is the symmetric sequence $\Pois=\Com\circ\Lie$.
The elements of $\Pois(r)$ can be understood as linear combinations of Poisson words in letters $x_1,\dots,x_r$ such that each letter occurs exactly once.
We shall also use a degree shifted version 
\[
\Pois_0 = \Com\circ\Lie_0 \, .  
\]
Both $\Pois$ and $\Pois_0$ carry a \emph{complexity grading} by the number of Lie brackets.
More precisely, we define $\Lie(r)$ and $\Lie_0(r)$ to be concentrated in complexity $r-1$, and this descends to give the complexity grading on $\Pois$ and $\Pois_0$.
By  \cite[Proposition 5.5]{Getzler} and \cite[Proposition 6.3]{AroneTurchin}, the graded equivariant Euler characteristics, with the variable $u$ counting the complexity as in \eqref{equ:chiudef}, are:
\begin{align}
  \label{equ:poisn euler}
    \chi^u(\Pois) &= \prod_{l\geq 1} (1- u^\ell p_\ell)^{ \frac 1 \ell \sum_{d\mid \ell}\mu(\ell/d) \frac 1 {u^d} } -1, \\
    \label{equ:poisn0 euler}
    \chi^u(\Pois_0) &= \prod_{\ell\geq 1} (1+ u^\ell p_\ell)^{ \frac 1 \ell \sum_{d\mid \ell}\mu(\ell/d) \frac 1 {u^d} } -1 
    \, . 
\end{align}
Also note that the complexity grading on $\Pois_0$ is, by definition, equal to the cohomological grading.

Next consider the symmetric sequence concentrated in arity one 
\[
HS(r) = 
\begin{cases}
  H(S^1) = \Q\oplus \Q[-1] &\text{for $r=1$} \\
  0 &\text{for $r\neq 1$}
\end{cases}.
\]
Then one defines the symmetric sequence 
\[
\BV_0 := \Pois_0\circ HS.
\]
This is a version of the Batalin-Vilkovisky operad (see \cite[\S 13.7]{LodayVallette}), hence the notation.
We declare the complexity grading on $\BV_0$ to be the same as the cohomological grading.

It is clear that $\chi^u(HS)=p_1-up_1$. From  \eqref{equ:plethism euler 1}, \eqref{equ:plethism euler 2}, and \eqref{equ:poisn0 euler} one then finds
\begin{align}
    \label{equ:bv euler}
    \chi^u(\BV_0) &= \prod_{\ell\geq 1} (1+ u^\ell (1-u^\ell) p_\ell)^{ \frac 1 \ell \sum_{d\mid \ell}\mu(\ell/d) \frac 1 {u^d} } -1 .
\end{align}
Elements of $\BV_0(r)$ are identified with linear combinations of Poisson words in variables $x_1,\dots,x_r, Dx_1,\dots, Dx_r$, with every index occurring exactly once, for example
\[
x_1\wedge Dx_2\wedge [x_3,Dx_4] \in \BV_0(4).  
\]
For technical reasons we also define the reduced sub-symmetric sequence $\croBV_0\subset \BV_0$ to be spanned by those Poisson words that do not have a factor $x_j$ for any $j$.  In other words,  
\[
  \BV_0 = (\croBV_0 \boxtimes \Com_1) \oplus \Com.
\]
Hence 
\begin{equation}\label{equ:euler crBV}
  \chi^u(\croBV_0)
  =
  (\chi^u(\BV_0)+1)\chi(\Com_1)^{-1} -1
  = 
  \prod_{\ell\geq 1} e^{-\frac 1 \ell p_\ell }(1+ u^\ell (1-u^\ell) p_\ell)^{ \frac 1 \ell \sum_{d\mid \ell}\mu(d/\ell) \frac 1 {u^d} } -1 .
\end{equation}

Finally, note that the definition of the $\bbS$-equivariant Euler characteristic above has a natural generalization for bisymmetric sequences. We shall only need the following example. Define the bisymmetric sequence $\DS$ such that 
\begin{equation}\label{equ:DSdef}
  \DS(r,s) = 
  \begin{cases}
    \Q[\bbS_r] & \text{for $r=s$} \\
    0 & \text{otherwise}
  \end{cases}.
\end{equation}
Here the group ring $\Q[\bbS_r]$ is considered as an $\bbS_r\times\bbS_r$-module by left and right multiplication. We then have 
\[
\chi^\bbS(\DS)
:= 
\sum_{r,s}\frac 1 {r!s!}\sum_{\sigma\in \bbS_r \atop \nu \in \bbS_s}
\STr_{\Delta_0}(\rho(\sigma, \nu)) p^{i(\sigma)}q^{i(\nu)}
=
\exp\left( \sum_{n\geq 1} \frac 1 n p_n q_n \right) \in \Lambda_{p,q}:= \Q\llbracket p_1,q_1,p_2,q_2,\dots \rrbracket.
\]


\section{Graph complexes associated to moduli spaces of curves}
\subsection{The graph complexes $G^{(g,n)}$ and weight zero compactly supported cohomology of $\MM_{g,n}$}

Let $g$ and $n$ be non-negative integers. The graph complex $G^{(g,n)}$ studied in \cite{CGP2, PayneWillwacher} is generated by connected graphs of genus $g$ with no loop edges and no vertices of valence 2.  The vertices of valence 1 are called \emph{external}, and the other vertices are \emph{internal}.  Each generator has precisely $n$ external vertices, labeled in bijection with $\{ 1, \ldots, n \}$, and the symmetric group $\bbS_n$ acts by permuting these labels.  The edges not connecting to univalent vertices are called \emph{structural}, and the cohomological degree of a generator is the number of structural edges.  Each generator comes with a total ordering of its structural edges, and we impose the relation that reordering is multiplication by the sign of the induced permutation.  The differential $\delta_{split}$ on $G^{(g,n)}$ is defined by splitting internal vertices.
\begin{align}
  \delta_{split} \Gamma &= \sum_{v \text{ vertex} } 
  \Gamma\text{ split $v$} 
  &
  \begin{tikzpicture}[baseline=-.65ex]
  \node[int] (v) at (0,0) {};
  \draw (v) edge +(-.3,-.3)  edge +(-.3,0) edge +(-.3,.3) edge +(.3,-.3)  edge +(.3,0) edge +(.3,.3);
  \end{tikzpicture}
  &\mapsto
  \sum
  \begin{tikzpicture}[baseline=-.65ex]
  \node[int] (v) at (0,0) {};
  \node[int] (w) at (0.5,0) {};
  \draw (v) edge (w) (v) edge +(-.3,-.3)  edge +(-.3,0) edge +(-.3,.3)
   (w) edge +(.3,-.3)  edge +(.3,0) edge +(.3,.3);
  \end{tikzpicture}
  \end{align}
  
  \begin{thm}[\cite{CGP2}]
  For each $g,n$ such that $2g+n\geq 3$ there is an $\bbS_n$-equivariant isomorphism
  \[
   H(G^{(g,n)}) \cong W_0H_c(\MM_{g,n}) 
  \]
  with the weight zero part of the compactly supported cohomology of the moduli space of curves.
  \end{thm}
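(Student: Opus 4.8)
The plan is to reconstruct the proof of \cite{CGP2}, realizing $W_0H_c^\bullet(\MM_{g,n})$ as the reduced cohomology of the boundary complex of the Deligne--Mumford compactification $\bMM_{g,n}$ and then recognizing that complex as $G^{(g,n)}$. First I would use the Deligne weight filtration on the compactly supported cohomology of a smooth Deligne--Mumford stack $U$ with normal crossings compactification $X=U\cup D$. The weight-zero graded piece is governed by the most degenerate part of the associated weight spectral sequence, built from the degree-zero cohomology $H^0(D_I)\cong\Q$ of the closed strata $D_I=\bigcap_{i\in I}D_i$; these assemble into the simplicial cochain complex of the dual complex $\Delta(D)$, yielding a natural $\bbS_n$-equivariant isomorphism
\[
W_0H_c^{k}(U)\;\cong\;\tilde H^{k-1}\bigl(\Delta(D);\Q\bigr),
\]
where the $(m-1)$-simplices of $\Delta(D)$ record the codimension-$m$ strata. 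Taking $U=\MM_{g,n}$ and $X=\bMM_{g,n}$ reduces the problem to computing the reduced cohomology of $\Delta(\partial\bMM_{g,n})$.

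Second I would identify $\Delta(\partial\bMM_{g,n})$ with the link $\Delta_{g,n}$ of the cone point of the tropical moduli space of curves. Because the boundary divisors of $\bMM_{g,n}$ self-intersect and their strata carry automorphisms, $\Delta_{g,n}$ is most naturally a \emph{symmetric $\Delta$-complex}: its $(m-1)$-cells are indexed by isomorphism classes of stable dual graphs with $m$ edges (vertices weighted by genus, $n$ legs labeled by the markings), the face maps contract edges, and graph automorphisms act on the edge set. The relevant formal input is that the rational cellular cochain complex of a symmetric $\Delta$-complex computes its reduced cohomology: for each graph one takes the determinant of its edge set (an ordering of the structural edges up to even permutation, whence the sign relation), and the coboundary un-contracts an edge, that is, splits a vertex. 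Under the degree shift $k\mapsto k-1$ this is precisely a graph complex with differential $\delta_{split}$, and every identification is manifestly $\bbS_n$-equivariant through the labeling of the legs.

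Third---the crux---I would cut this complex of all stable weighted dual graphs down to $G^{(g,n)}$, whose generators are the pure (all vertices of genus zero), loopless graphs of first Betti number $g$. This relies on the contractibility results of \cite{CGP1}: the subcomplex of $\Delta_{g,n}$ spanned by graphs carrying a positively weighted vertex or a loop edge is contractible, so it may be collapsed without altering reduced cohomology. After the collapse only pure loopless graphs remain, and stability forces every genus-zero vertex to have valence at least three, which is exactly the condition ``no vertices of valence $2$.'' Matching the surviving coboundary with $\delta_{split}$ and the cohomological degree (the number of structural edges) with the shifted simplicial degree then gives the asserted isomorphism $H(G^{(g,n)})\cong W_0H_c(\MM_{g,n})$.

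I expect this last contractibility step to be the main obstacle. Producing explicit deformation retractions, or discrete Morse matchings, that collapse exactly the subcomplex of graphs with loops or positive vertex weights---while tracking the orientation signs and the $\bbS_n$-action and verifying that no class relevant to $W_0H_c$ is discarded---is the technical heart of the argument; by contrast, the weight spectral sequence and the symmetric $\Delta$-complex formalism are comparatively formal once set up.
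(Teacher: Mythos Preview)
The paper does not prove this statement; it is quoted verbatim as a theorem of \cite{CGP2} and used as a black box. Your outline is an accurate reconstruction of the argument in \cite{CGP1,CGP2}: identify $W_0H_c^\bullet(\MM_{g,n})$ with the reduced cohomology of the boundary complex of $\bMM_{g,n}$, realize that boundary complex as the tropical moduli space $\Delta_{g,n}$ viewed as a symmetric $\Delta$-complex, and then collapse the contractible locus of graphs with loops or positive vertex weights to land on $G^{(g,n)}$. There is nothing to compare against in the present paper.
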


\noindent The differential does not affect the Euler characteristic. In this paper we will work with $G^{(g,n)}$ and its variants primarily as graded representations of symmetric groups. 

  \subsection{Two variants of $G^{(g,n)}$}
 Distinguishing structural edges from nonstructural edges in the definition of $G^{(g,n)}$ will be inconvenient for the calculations that follow. For this reason, we define the variant $\tG^{(g,n)}$ exactly as above, except that the degree of a graph is the number of edges, and the orientation is given by a total ordering of all of the edges, not just the structural ones. 
  For $(g,n)\neq (0,2)$ one can readily see that
  \[
  \tG^{(g,n)} \cong G^{(g,n)} \otimes \sgn_n[-n].  
  \]
  
  We also consider an enlargement of $\tG^{(g,n)}$ obtained by relaxing the connectedness assumption. Generators of the resulting graph complex $\ftG^{(g,n)}$ are possibly disconnected graphs of genus $g$ with no loop edges and no vertices of valence 2. The external vertices are labeled in bijection with $\{1, \ldots, n\}$, and the orientation is given by a total ordering of all edges.  The genus $g$ of a possibly disconnected graph with $e$ edges and $v$ vertices is 
  \[
  g := e - v + 1. 
  \]
Note that some disconnected graphs have negative genus.

From $\ftG^{(g,n)}$, we build the symmetric sequence $\ftG$ defined by 
  \[
  \ftG(r) := \bigoplus_{g\in \Z} \ftG^{(g,r)}.
  \]
On this symmetric sequence we consider two different gradings: the \emph{genus grading}, in which $\ftG^{(g,r)}$ is of genus $g$, and the \emph{complexity grading} in which $\ftG^{(g,r)}$ is of complexity $g+r-1$. This is the genus of the connected graph obtained by fusing all external vertices into one.  Note that the complexity grading is non-negative, and the number of isomorphism classes of graphs of a given complexity is finite.

\subsection{Graph complexes with one special vertex}
Let $\AOp$ be a symmetric sequence. Consider 
\[
\AOp \otimes_\bbS \ftG.    
\]
This is a graded vector space whose elements are naturally interpreted as linear combinations of graphs as above, but with one special vertex that carries a decoration in $\AOp$.
\[
\begin{tikzpicture}
\node[int] (w1) at (-.8, .7) {};
\node[int] (w2) at (-.8, 1.7) {};
\node[int] (w3) at (.8, .7) {};
\node[int] (w4) at (.8, 1.7) {};
\node[draw, ellipse, inner sep=1em] (b) at (0,-.7) {$a$};
\draw (w1) edge (w2) edge (w3) edge (w4)
(w2) edge (w3) edge (w4) 
(w3) edge (w4)
(b.north west) edge (w1) 
(b) edge (w2) (b.north east) edge (w3);
\draw [decorate,
    decoration = {brace}] (1.5,2) --  (1.5,0.3)
    node[pos=0.5,left=-30pt,black]{$\in\ftG$};
\draw [dashed] (-1.2,0.2)--(1.2,0.2);
    \node at (-1.5,0.2) {$\otimes_\bbS$};
\draw [decorate,
    decoration = {brace}] (1.5,0.1) --  (1.5,-1.2)
    node[pos=0.5,left=-30pt,black]{$\in\AOp$};
\end{tikzpicture}
\]
More precisely, the decoration of the special vertex is in $\AOp(r)$, with $r$ the valence of the special vertex.
The complexity grading on $\ftG$ induces a complexity grading on $\AOp \otimes_\bbS \ftG$, in which the complexity of a graph with one vertex decorated by $\AOp$ is equal to its genus. The following result gives a general formula for the complexity-graded Euler characteristic of such $\AOp$-decorated graph complexes.

\begin{prop}\label{prop:iHom Gr}
  Let $\AOp$ be any symmetric sequence in finite dimensional graded vector spaces.
  Then we have
  \begin{align*}
  \chi^u\left(\AOp\otimes_\bbS \ftG)\right)
  &=
  \chi^u\left((\AOp\circ \Lie_0^*) \otimes_{\bbS} \croBV_0\right)
  =
  \left\langle \chi^\bbS(\AOp)\circ \chi^\bbS(\Lie_0) 
  , \chi^u(\croBV_0)\right\rangle
  \, ,
  \end{align*}
with $u$ counting the complexity, and $\chi^u(-)$ being the graded Euler characteristic. 
\end{prop}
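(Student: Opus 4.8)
The right-hand equality is the easy one. It is an instance of the graded version of \eqref{equ:tens prod}: since the complexity grading lives entirely on the $\croBV_0$ factor, carrying the variable $u$ along in the inner product of \eqref{equ:prod def} gives $\chi^u\big((\AOp\circ\Lie_0^*)\otimes_\bbS\croBV_0\big)=\langle\chi^\bbS(\AOp\circ\Lie_0^*),\chi^u(\croBV_0)\rangle$. I would then rewrite $\chi^\bbS(\AOp\circ\Lie_0^*)=\chi^\bbS(\AOp)\circ\chi^\bbS(\Lie_0^*)$ using the plethysm compatibility recalled around \eqref{equ:plethism euler 1}, and finally observe that $\chi^\bbS(\Lie_0^*)=\chi^\bbS(\Lie_0)$. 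The last point holds because dualizing a finite-dimensional graded $\bbS_r$-representation leaves the underlying $\bbS_r$-character unchanged and only reverses the cohomological degree; for $\Lie_0(r)=\Lie(r)\otimes\sgn_r[1-r]$ this replaces the shift $[1-r]$ by $[r-1]$, an even change that does not affect the sign $(-1)^{\deg}$ in the Euler characteristic.

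The content is therefore the left-hand equality, and I would prove it by exhibiting a natural (in $\AOp$) isomorphism of complexity-graded symmetric sequences $\AOp\otimes_\bbS\ftG\cong(\AOp\circ\Lie_0^*)\otimes_\bbS\croBV_0$. Because the differential does not affect the Euler characteristic, it suffices to produce an isomorphism of complexity-graded $\bbS$-representations, ignoring the splitting differential $\delta_{split}$ entirely. Unwinding the definitions of $\circ$ and $\otimes_\bbS$ and applying Fubini for coinvariants, the right-hand side equals $\bigoplus_k\AOp(k)\otimes_{\bbS_k}\big[(\Lie_0^*)^{\boxtimes k}\otimes_\bbS\croBV_0\big]$, while the left-hand side is $\bigoplus_k\AOp(k)\otimes_{\bbS_k}\ftG(k)$. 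Hence the whole statement reduces to a single $\bbS_k$-equivariant isomorphism of complexity-graded vector spaces
\[
\ftG(k)\cong(\Lie_0^*)^{\boxtimes k}\otimes_\bbS\croBV_0=\bigoplus_m(\Lie_0^*)^{\boxtimes k}(m)\otimes_{\bbS_m}\croBV_0(m),
\]
to be established uniformly in $k$, with the genus grading on the left matched to the complexity grading on the right.

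To build this isomorphism I would read the right-hand side graphically and match it term by term with graphs in $\ftG(k)$. An element of $\croBV_0(m)$ is a reduced Poisson word in $x_1,\dots,x_m,Dx_1,\dots,Dx_m$; its commutative factor (the product of Lie brackets in $\Pois_0=\Com\circ\Lie_0$) records the splitting of the graph into connected components, each bracket cluster being a connected piece, while the $HS=H(S^1)$ decorations $Dx_j$ account for the loops contributing to the genus. Resolving each internal $\geq 3$-valent vertex of a graph into binary trees is precisely the Koszul duality between $\Com$ and $\Lie$, and it is this resolution that attaches a co-Lie decoration $\Lie_0^*$ to each of the $k$ external legs: the $m$ half-edges meeting the legs are partitioned into $k$ groups, the group at leg $i$ carrying a factor $\Lie_0^*(m_i)$. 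The reducedness of $\croBV_0$ (no bare $x_j$) should correspond to the absence of bivalent vertices and isolated legs, and the degree shifts $[1-r]$ in $\Lie_0$ together with the edge-ordering orientation of $\ftG$ should account for the signs.

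The main obstacle is making this correspondence precise and genuinely $\bbS_k$-equivariant, with all signs, orientations, and degree shifts accounted for, and in particular verifying that the genus $g=e-v+1$ of a graph equals the complexity (cohomological degree) of the matching $\croBV_0$ element under the Koszul-dual resolution. I expect the cleanest way to control the signs is to fix the isomorphism on a chosen spanning set of graphs and of Poisson words and to check compatibility with the $\bbS_k$- and $\bbS_m$-actions and with both gradings, rather than to track a chain-level quasi-isomorphism. As a consistency check, and as a fallback should the explicit bijection prove delicate, I would compute $\chi^u(\ftG)$ directly as a weighted sum over isomorphism classes of graphs, each counted with $1/|\Aut|$ and the orientation sign, and verify that for $\AOp=\opOne$ it reproduces $\langle\chi^\bbS(\Lie_0),\chi^u(\croBV_0)\rangle$ using the known Euler characteristics \eqref{equ:euler Lie} and \eqref{equ:euler crBV}.
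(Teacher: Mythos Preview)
Your treatment of the right-hand equality is fine and matches the paper. The gap is in the left-hand equality: the isomorphism of complexity-graded vector spaces
\[
\ftG(k)\;\cong\;(\Lie_0^*)^{\boxtimes k}\otimes_\bbS\croBV_0
\]
that you propose to construct does \emph{not} exist. For $k=1$ and complexity $1$ (genus $1$), the left-hand side is zero: a graph with one univalent external vertex, no loops, no bivalent vertices, and $e=v$ forces $v\leq 2$, and both cases are excluded. But on the right-hand side, $\Lie_0^*(1)\otimes\croBV_0(1)^{c=1}=\Q\langle Dx_1\rangle$ contributes a class in cohomological degree $1$, while $\Lie_0^*(2)\otimes_{\bbS_2}\croBV_0(2)^{c=1}=\Q\langle[x_1,x_2]\rangle$ contributes a class in degree $0$. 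These cancel in the Euler characteristic but give a two-dimensional graded vector space, not zero. Your heuristic of ``resolving internal vertices into binary trees'' is inherently a quasi-isomorphism with a nontrivial differential, not a graded-vector-space isomorphism, and this is exactly where the discrepancy enters.

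The paper circumvents this by inserting the graphical model $\croBVGraphs_0$ between $\ftG$ and $\croBV_0$. One first uses the Koszul identity $\chi^\bbS(\Lie_0^*\circ\Com)=p_1$ at the level of Euler characteristics to rewrite $\chi^u(\AOp\otimes_\bbS\ftG)=\chi^u((\AOp\circ\Lie_0^*\circ\Com)\otimes_\bbS\ftG)$. Then an elementary three-level-graph identification (Lemma~\ref{lem:circ adjoint}) gives an honest isomorphism $(\BOp\circ\Com)\otimes_\bbS\ftG\cong\BOp\otimes_\bbS\croBVGraphs_0$ for any $\BOp$. Finally, the passage from $\croBVGraphs_0$ to $\croBV_0$ is the nontrivial input: it is the Kontsevich--Lambrechts--Voli\'c formality quasi-isomorphism $\BV_0\to\BVGraphs_0$ (Theorem~\ref{thm:Graphs qiso}), which only yields an equality of Euler characteristics. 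Your direct approach would amount to reproving that formality statement from scratch, and the sketch you give does not do so.
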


\noindent Note that the graded Euler characteristic is well-defined since the number of graphs of any given complexity is finite.
Note also that the complexity grading is derived solely from the factor $\ftG$, or respectively $\croBV_0$. The other factors $\Lie_0$ or $\AOp$ do not contribute to the complexity, i.e., they are considered to be of complexity zero.
Finally, recall that $\chi^\bbS(\Lie_0)$ and $\chi^u(\croBV_0)$ are given in \eqref{equ:euler Lie} and \eqref{equ:euler crBV}, respectively.


\subsection{Proof of Proposition \ref{prop:iHom Gr}}
\label{sec:iHom Gr proof}

\label{sec:Graphs operad}

In the proof, we will use the Kontsevich graphical operad $\Graphs_0$. We briefly sketch its construction and main properties. For details we refer to Kontsevich's original paper \cite{KMotives}, or recollections elsewhere, for example \cite[\S 2.9f]{FTW2}.
Elements of $\Graphs_0(r)$ are series of graphs with $r$ labeled external vertices and an arbitrary number of internal vertices such as in the following picture.
 \[
   \begin{tikzpicture}
  \node[ext] (v1) at (0,0) {1};
  \node[ext] (v2) at (0.5,0) {2};
  \node[ext] (v3) at (1,0) {3};
  \node[ext] (v4) at (1.5,0) {4};
  \node[ext] (v5) at (2,0) {5};
  \node[int] (w1) at (0.5,.7) {};
  \node[int] (w2) at (1.0,.7) {};
  \draw (v1) to [out=50,in=120] (v2) (v1) edge (w1)  (v2) edge (w1) edge (w2) (v3) edge (w1) edge (w2) (v4) edge (w2) (w1) edge (w2);
 \end{tikzpicture}
\]
The symmetric group $\bbS_r$ acts by permuting the labels $1,\dots,r$ of external vertices.
The graph is oriented by a total ordering of its edgs. The sign conventions are thus identical to those for $\tG^{(g,n)}$ above. The degree is the number of edges.
One allows loops at both internal and external vertices.
Note that the symmetric sequence $\Graphs_0$ 
carries a dg operad structure in which the differential is given by vertex splitting; however, the differential does not matter for the purposes of computing Euler characteristics.

We also consider the variant $\BVGraphs_0$, obtained as a quotient of $\Graphs_0$ by setting to zero graphs with loop edges at internal vertices. (Loops are still allowed at external vertices.)  Both $\Graphs_0$ and $\BVGraphs_0$ have a complexity grading: the complexity of a graph with $v$ internal vertices and $e$ edges is $e-v$.

Furthermore, we consider the subspaces
\begin{align*}
  \croGraphs_0 &\subset \Graphs_0 
  &
  \croBVGraphs_0 &\subset \BVGraphs_0 
\end{align*}
spanned by graphs all of whose external vertices have valence $\geq 1$. 

 \begin{thm}[Kontsevich \cite{KMotives}, Lambrechts-Volic \cite{LambrechtsVolic}\footnote{Only the result for $\Graphs_0$ is stated in the given references, but the version for $\BVGraphs_0$ is an easy consequence.}]
  \label{thm:Graphs qiso}
     There are operad quasi-isomorphisms $\Pois_0\to\Graphs_0$ and $\BV_0\to \BVGraphs_0$ that preserve the complexity gradings and restrict to quasi-isomorphisms on the reduced parts $(-)^\rd$. In particular, 
     \[
        \chi^u(\croBVGraphs_0) = \chi^u(\croBV_0).
     \]
 \end{thm}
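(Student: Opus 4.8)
The plan is to reduce the statement to Kontsevich's formality theorem for the little $2$-disks operad, namely that the natural map $\Pois_0\to\Graphs_0$ is a quasi-isomorphism, and then to obtain the complexity-graded refinement, the reduced variant, and the $\BVGraphs_0$ version by formal bookkeeping.

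First I would write down the operad morphism $\phi\colon\Pois_0\to\Graphs_0$ on generators: the commutative product in $\Pois_0(2)$ is sent to the graph on two external vertices with no edge, and the degree-one bracket in $\Lie_0(2)\subset\Pois_0(2)$ is sent to the graph on two external vertices joined by a single edge. A direct check shows that $\phi$ respects associativity, the Jacobi relation, and the Leibniz relation, and that both target graphs are $\delta_{split}$-cocycles, since they have no internal vertices and hence no vertex to split. Both $\Pois_0$ and $\Graphs_0$ carry complexity gradings compatible with the operad structure, and $\phi$ matches them on generators --- the edgeless product has complexity $0$ and the single-edge bracket has complexity $e-v_{\mathrm{int}}=1$, matching the single Lie bracket --- so $\phi$ preserves the complexity grading everywhere. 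Note that the image of $\phi$ lies in graphs with only external vertices, on which the complexity equals the cohomological degree, just as on $\Pois_0$.

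The heart of the argument is that $\phi$ induces an isomorphism $\Pois_0=H(\Pois_0)\xrightarrow{\ \sim\ }H(\Graphs_0)$, which is exactly Kontsevich's formality of $D_2$ and is the genuinely deep input that I would invoke from the literature rather than reprove. Following Lambrechts--Volic, one factors $\phi$ through a zig-zag of operad quasi-isomorphisms into the $\mathrm{PA}$-de~Rham forms on the Fulton--MacPherson compactification $\FM_2$, defined by configuration space integrals, and then uses the classical identification $H_\bullet(\FM_2)\cong H_\bullet(D_2)\cong\Pois_0$. The totally loop-free model $\Graphs_0^{\mathrm{nl}}\subset\Graphs_0$ is the one that appears most directly in these references; the passage from it to $\Graphs_0$ (which also permits tadpoles at internal and external vertices) is harmless because the tadpole classes are acyclic, so the inclusion $\Graphs_0^{\mathrm{nl}}\hookrightarrow\Graphs_0$ is itself a quasi-isomorphism. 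All of these maps respect the complexity grading, so $\phi$ is a complexity-preserving quasi-isomorphism. This step is the \emph{main obstacle}; everything that follows is formal.

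It remains to extract the reduced and $\BV$ statements. A valence-zero external vertex of a graph corresponds precisely to a bare factor $x_j$ in a Poisson word, and splitting these off yields compatible decompositions $\Graphs_0\cong(\croGraphs_0\boxtimes\Com_1)\oplus\Com$ and $\Pois_0\cong(\croPois_0\boxtimes\Com_1)\oplus\Com$ that are intertwined by $\phi$; hence $\phi$ restricts to a quasi-isomorphism $\croPois_0\to\croGraphs_0$. For the $\BV$ version I would observe that at each external vertex at most one loop can occur (a second parallel loop is interchanged with the first by an odd permutation of edges and therefore vanishes), so an external vertex carries exactly the decoration $H(S^1)=\Q\oplus\Q[-1]$; since $\delta_{split}$ never touches these loops, there is an isomorphism of complexes $\BVGraphs_0\cong\Graphs_0^{\mathrm{nl}}\circ HS$, where the arity-one sequence $HS$ records the presence or absence of an external loop. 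Because plethysm with the arity-one sequence $HS$ is an exact functor, applying $\phi\circ\id_{HS}$ to the loop-free formality quasi-isomorphism gives a complexity-preserving quasi-isomorphism $\BV_0=\Pois_0\circ HS\to\Graphs_0^{\mathrm{nl}}\circ HS\cong\BVGraphs_0$, which restricts to $\croBV_0\to\croBVGraphs_0$ on reduced parts. Finally, a complexity-preserving quasi-isomorphism between symmetric sequences with finite-dimensional complexity-graded pieces has equal graded Euler characteristics in each complexity degree, giving $\chi^u(\croBVGraphs_0)=\chi^u(\croBV_0)$, as claimed.
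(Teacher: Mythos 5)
Your proposal is correct and takes essentially the same route as the paper, which offers no proof beyond citing Kontsevich and Lambrechts--Volic for the quasi-isomorphism $\Pois_0\to\Graphs_0$ and remarking in a footnote that the $\BVGraphs_0$ statement is an easy consequence; your deduction of that consequence via $\BV_0=\Pois_0\circ HS$ and $\BVGraphs_0\cong \Graphs_0^{\mathrm{nl}}\circ HS$ (parallel loops vanishing by the odd-edge sign, the external loop inert under $\delta_{split}$, and plethysm with an arity-one sequence being exact) is exactly the bookkeeping the footnote intends. The one point you state more casually than it deserves is the acyclicity of the tadpole part of $\Graphs_0$ --- note that there an external tadpole is not even closed, since splitting its external vertex produces an internal-tadpole graph, which is precisely why the class survives in $\BVGraphs_0$ but not in $\Graphs_0$ --- but this is a known lemma, and invoking it is consistent with the citation-level treatment in the paper itself.
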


The connection between $\croBVGraphs_0$ and our $\ftG$ is as follows.

\begin{lemma}\label{lem:circ adjoint}
Let $\BOp$ be any symmetric sequence and $C\in \Z$. Then 
\begin{equation}\label{equ:circ adjoint}
    \BOp \otimes_\bbS \gr^C \croBVGraphs_0
    \cong 
(\BOp\circ \Com) \otimes_\bbS \gr^C \ftG,
\end{equation}
with $\gr^C(-)$ referring to the part of complexity $C$.
In particular, if $\BOp$ is finite-dimensional in each arity then 
\[
\chi^u\left(\BOp \otimes_\bbS \croBVGraphs_0\right)
=
\chi^u\left((\BOp\circ \Com) \otimes_\bbS \ftG\right).
\] 
\end{lemma}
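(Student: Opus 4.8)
The plan is to prove the displayed isomorphism \eqref{equ:circ adjoint} by an explicit bijection of generators, and then read off the Euler characteristic identity by taking graded super-dimensions. The first step is purely formal bookkeeping: I would unpack the plethysm and reorganize the sums so that $\BOp$ plays no role beyond riding along. Writing $(\BOp\circ\Com)(r)=\bigoplus_k \BOp(k)\otimes_{\bbS_k}\Com^{\boxtimes k}(r)$ and using the definition of $\otimes_\bbS$ together with the associativity and commutativity of the coinvariant constructions $\otimes_{\bbS_k}$ and $\otimes_{\bbS_r}$, the right-hand side of \eqref{equ:circ adjoint} becomes
\[
\bigoplus_k \BOp(k)\otimes_{\bbS_k}\Big(\bigoplus_r \Com^{\boxtimes k}(r)\otimes_{\bbS_r}\gr^C\ftG(r)\Big),
\]
while the left-hand side is $\bigoplus_k \BOp(k)\otimes_{\bbS_k}\gr^C\croBVGraphs_0(k)$. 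Hence it suffices to produce, for every $k$, an isomorphism of $\bbS_k$-modules
\[
\gr^C\croBVGraphs_0(k)\ \cong\ \bigoplus_r \Com^{\boxtimes k}(r)\otimes_{\bbS_r}\gr^C\ftG(r),
\]
compatible with the cohomological grading.

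The core of the argument is to establish this $\bbS_k$-isomorphism by a \emph{merge/split} bijection on graphs. A generator of the right-hand side is a graph $\Gamma\in\ftG(r)$ with $r$ univalent external vertices, together with the data of $\Com^{\boxtimes k}(r)$: an ordered partition of the $r$ external vertices into $k$ nonempty blocks, each block unordered because $\Com(j)=\Q$ is the trivial representation. The map $\Phi$ merges the external vertices of each block into a single external vertex. Since the external vertices of $\ftG$ are univalent, the $i$-th merged vertex acquires valence equal to the size $j_i\geq 1$ of its block, while the internal vertices and all edges are left unchanged; the output is a graph with $k$ external vertices of valence $\geq 1$ and no loops at internal vertices, that is, a generator of $\croBVGraphs_0(k)$. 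The inverse $\Psi$ splits each external vertex of valence $j$ back into $j$ univalent hairs, one per incident edge, recording which hairs came from the same vertex. Because the internal subgraph is untouched, the defining conditions on internal vertices correspond exactly on the two sides; external--external hair edges of $\ftG$ correspond to loops at external vertices, which $\croBVGraphs_0$ permits; and both the cohomological degree (the number of edges) and the complexity $C=e-v_{\mathrm{int}}$ are preserved. Relabeling the $k$ blocks matches relabeling the $k$ external vertices, so $\Phi$ is $\bbS_k$-equivariant.

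The step that needs the most care, and which I expect to be the main obstacle, is checking that $\Phi$ respects orientations and signs. An orientation on either side is a total ordering of the edge set up to sign, and $\Phi$ identifies the two edge sets bijectively, so an ordering transports directly; the content is to verify that the trivial-representation symmetrization coming from $\Com$ is exactly compatible with this edge ordering. Concretely, permuting the hairs within a block carries no sign from $\Com$ but does permute the corresponding hair edges, producing the Koszul sign of that permutation in the edge ordering — and this is precisely the sign produced by permuting those same edges at the merged external vertex in $\croBVGraphs_0$. In particular the degenerate configurations match: two hairs of one block attached to a common internal vertex (a double edge after merging), and two hairs of one block joined to each other (a loop at the merged external vertex), are sent to exactly the same double edge and external loop, so a generator is annihilated by an orientation-reversing symmetry on one side if and only if its image is. This is the one place where the choice of $\Com$ (rather than a signed operad) is essential.

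Once sign-compatibility is confirmed, $\Phi$ is an isomorphism of the oriented, bigraded vector spaces, which proves \eqref{equ:circ adjoint}. Finally, because each complexity degree $\gr^C\ftG$ contains finitely many graphs and $\BOp$ is finite dimensional in each arity, both sides of the isomorphism are finite dimensional in each fixed complexity and cohomological degree. Applying the graded super-Euler characteristic $\chi^u$ (with $u$ recording the complexity and the sign recording the cohomological degree) to the isomorphism then yields the asserted identity $\chi^u\!\left(\BOp\otimes_\bbS\croBVGraphs_0\right)=\chi^u\!\left((\BOp\circ\Com)\otimes_\bbS\ftG\right)$.
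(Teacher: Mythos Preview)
Your proposal is correct and follows essentially the same idea as the paper's proof: both identify the two sides as the same space of ``three-level'' graphs, with the intermediate $\Com$-layer on the right corresponding exactly to the external vertices of $\croBVGraphs_0$ on the left. The paper simply draws the two pictures and declares them equal; your merge/split bijection and the explicit verification of $\bbS_k$-equivariance, degree, complexity, and edge-ordering signs are a careful unpacking of that one-line identification, and the check that blocks are nonempty (forced by $\Com(0)=0$) versus external vertices of valence $\geq 1$ in $\croBVGraphs_0$ is exactly the point.
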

\begin{proof}
As in the previous section the right-hand side of \eqref{equ:circ adjoint} may be interpreted as the space of linear combinations of graphs of genus (i.e., loop number) $C$, with one special vertex decorated by $\BOp\circ \Com$. Equivalently such graphs may be interpreted as 3-level graphs with one vertex decorated by $\BOp$, connected to another layer of vertices contributed by the factor $\Com$.
\[
\begin{tikzpicture}
\node[int] (w1) at (-.6, .7) {};
\node[int] (w2) at (-.6, 1.7) {};
\node[int] (w3) at (.6, .7) {};
\node[int] (w4) at (.6, 1.7) {};
\draw (w1) edge (w2) edge (w3) edge (w4)
(w2) edge (w3) edge (w4) 
(w3) edge (w4);
\draw [decorate,
    decoration = {brace}] (1.5,2) --  (1.5,0.5)
    node[pos=0.5,left=-30pt,black]{$\in\ftG$};
\node[ext] (v1) at (-1,0) {};
\node[ext] (v2) at (0,0) {};
\node[ext] (v3) at (1,0) {};
\draw (w1) edge (v1) edge (v2) (w3) edge (v3)
(w2) edge (v1) (w4) edge (v2);
\node[draw, ellipse, inner sep=1em] (b) at (0,-1.2) {$b$};
\draw (b) edge (v1) edge (v3) edge (v2);
\draw [dashed] (-1.2,.4)--(1.2,.4);
\node at (-1.5,.4) {$\otimes_\bbS$};
\draw [decorate,
    decoration = {brace}] (1.5,0.3) --  (1.5,-.4)
    node[pos=0.5,left=-37pt,black]{$\in\Com$};
\draw [decorate,
    decoration = {brace}] (1.5,-.5) --  (1.5,-1.7)
    node[pos=0.5,left=-24pt,black]{$\in\BOp$};
\end{tikzpicture}
\]
But the left-hand side of \eqref{equ:circ adjoint} also has the interpretation as linear combinations of three-level graphs of genus $C$. In this case the middle layer of vertices is contributed by the external vertices of graphs in $\croBVGraphs_0$.
\[
\begin{tikzpicture}
\node[int] (w1) at (-.6, .7) {};
\node[int] (w2) at (-.6, 1.7) {};
\node[int] (w3) at (.6, .7) {};
\node[int] (w4) at (.6, 1.7) {};
\draw (w1) edge (w2) edge (w3) edge (w4)
(w2) edge (w3) edge (w4) 
(w3) edge (w4);
\draw [decorate,
    decoration = {brace}] (1.5,2) --  (1.5,-.3)
    node[pos=0.5,left=-70pt,black]{$\in\croBVGraphs_0$};
\node[ext] (v1) at (-1,0) {};
\node[ext] (v2) at (0,0) {};
\node[ext] (v3) at (1,0) {};
\draw (w1) edge (v1) edge (v2) (w3) edge (v3)
(w2) edge (v1) (w4) edge (v2);
\node[draw, ellipse, inner sep=1em] (b) at (0,-1.2) {$b$};
\draw (b) edge (v1) edge (v3) edge (v2);
\draw [dashed] (-1.2,-.4)--(1.2,-.4);
\node at (-1.5,-.4) {$\otimes_\bbS$};
\draw [decorate,
    decoration = {brace}] (1.5,-.5) --  (1.5,-1.7)
    node[pos=0.5,left=-24pt,black]{$\in\BOp$};
\end{tikzpicture}
\]
Hence we conclude that both sides of \eqref{equ:circ adjoint} are identical.
\end{proof}

As a final ingredient we will use the formula 
\begin{equation}\label{equ:Com Koszul}
\chi^\bbS(\Lie_0^* \circ \Com)=
\chi^\bbS(\Lie_0^*) \circ \chi^\bbS(\Com)=
p_1.
\end{equation}
This formula is a consequence of the Koszul property of the commutative operad \cite[Theorem~7.6.4(i) and Proposition~13.1.7]{LodayVallette}. One can also derive it directly from \eqref{equ:euler Com} and \eqref{equ:euler Lie}, using the plethysm rules \eqref{equ:plethism euler 1}.

\begin{proof}[Proof of Proposition \ref{prop:iHom Gr}]
The plethysm product with $p_1$, $f\mapsto f\circ p_1$, is the identity. Hence, from \eqref{equ:Com Koszul} it follows that on the level of Euler characteristics 
\[
\chi^u(\AOp\otimes_\bbS \ftG))
=
\chi^u((\AOp\circ \Lie_0^* \circ \Com) \otimes_\bbS \ftG)).
\]
Using Lemma \ref{lem:circ adjoint} this is the same as 
\[
 \chi^u((\AOp\circ \Lie_0^*) \otimes_\bbS \croBVGraphs_0)).
\]
Using Theorem \ref{thm:Graphs qiso} this in turn is identified with 
\[
    \chi^u((\AOp\circ \Lie_0^*) \otimes_{\bbS} \croBV_0),
\]
Applying (the graded version of) \eqref{equ:tens prod} we can evaluate this further to 
\[
  \chi^u((\AOp\circ \Lie_0^*) \otimes_{\bbS} \croBV_0)
  =
  \left\langle \chi^\bbS(\AOp\circ \Lie_0^*) 
  , \chi^u(\croBV_0)\right\rangle
  =
  \left\langle \chi^\bbS(\AOp)\circ \chi^\bbS(\Lie_0) 
  , \chi^u(\croBV_0)\right\rangle,
\]
and hence Proposition \ref{prop:iHom Gr} follows.

\end{proof}


\subsection{The graph complexes $X_{g,n}$ and weight two compactly supported cohomology of $\MM_{g,n}$}
The main result of \cite{PayneWillwacher} is the identification of $\gr_2 H^\bullet_c(\MM_{g,n})$ with the cohomology of a graph complex $X_{g,n}$ that is close to graph complexes arising in knot theory and the embedding calculus.
We shall use this identification to show our main Theorem \ref{thm:eulerchar}, by computing the Euler characteristic of $X_{g,n}$. 

Recall that the generators for $X_{g,n}$ are simple graphs without loops or multiple edges, in which no vertices have valence 2.  The vertices of valence at least 3 are \emph{internal} and those of valence 1 are \emph{external}.  Each external vertex is decorated with an element from the set $\{\epsilon, \omega, 1, \ldots, n \}$, such that:
\begin{itemize}
\item Each label $1, \ldots, n$ appears exactly once and the label $\omega$ appears exactly twice;
\item The graph obtained by joining all external vertices labeled $\epsilon$ or $\omega$ is connected and has genus $g$;
\end{itemize}
Say that an edge with two external vertices labeled $a$ and $b$ is an \emph{$(a,b)$-edge}. We further require that
\begin{itemize}
\item No connected component is an $(\epsilon,\omega)$ or $(\omega,\omega)$ edge; 
\end{itemize}

\noindent An edge is \emph{structural} if it does not contain an external vertex with label from $\{1, \ldots, n\}$. The degree of a graph is the number of structural edges plus one.  Each generator comes with a total ordering of the structural edges, and we impose the relation that permuting the structural edges is multiplication by the sign of the permutation.

The differential $\delta$ on $X_{g,n}$ is a sum of two parts  
$
\delta = \delta_{split} + \delta_{join},
$ 
defined by splitting internal vertices
\begin{align}
  \delta_{split} \Gamma &= \sum_{v \text{ vertex} } 
  \Gamma\text{ split $v$} 
  &
  \begin{tikzpicture}[baseline=-.65ex]
  \node[int] (v) at (0,0) {};
  \draw (v) edge +(-.3,-.3)  edge +(-.3,0) edge +(-.3,.3) edge +(.3,-.3)  edge +(.3,0) edge +(.3,.3);
  \end{tikzpicture}
  &\mapsto
  \sum
  \begin{tikzpicture}[baseline=-.65ex]
  \node[int] (v) at (0,0) {};
  \node[int] (w) at (0.5,0) {};
  \draw (v) edge (w) (v) edge +(-.3,-.3)  edge +(-.3,0) edge +(-.3,.3)
   (w) edge +(.3,-.3)  edge +(.3,0) edge +(.3,.3);
  \end{tikzpicture}
\end{align}
and joining external vertices
\begin{align}
  \delta_{join} 
 \begin{tikzpicture}[baseline=-.8ex]
 \node[draw,circle] (v) at (0,.3) {$\Gamma$};
 \node (w1) at (-.7,-.5) {};
 \node (w2) at (-.25,-.5) {};
 \node (w3) at (.25,-.5) {};
 \node (w4) at (.7,-.5) {};
 \draw (v) edge (w1) edge (w2) edge (w3) edge (w4);
 \end{tikzpicture} 
 = 
 \sum_{\substack{S\subset \mbox{\small \{$\epsilon$- and $\omega$- legs$\}$} \\ |S|\geq 2 }}  
 \begin{tikzpicture}[baseline=-.8ex]
 \node[draw,circle] (v) at (0,.3) {$\Gamma$};
 \node (w1) at (-.7,-.5) {};
 \node (w2) at (-.25,-.5) {};
 \node[int] (i) at (.4,-.5) {};
 \node (w4) at (.4,-1.3) {$\epsilon$ or $\omega$};
 \draw (v) edge (w1) edge (w2) edge[bend left] (i) edge (i) edge[bend right] (i) (w4) edge (i);
 \end{tikzpicture} \, .
 \end{align}
We refer to \cite{PayneWillwacher} for details. 
%

\begin{thm}[Theorem 1.1 of \cite{PayneWillwacher}]
    \label{thm:weight2}
There is an $\bbS_n$-equivariant isomorphism
\[
H(X_{g,n})\cong \gr_2 H_c(\MM_{g,n})  
\]
for each $(g,n)\neq (1,1)$ with $2g+n\geq 3$.
\end{thm}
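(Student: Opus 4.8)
The plan is to compute $\gr_2 H_c(\MM_{g,n})$ directly, by the same mechanism that produces the weight-zero identification $H(G^{(g,n)})\cong W_0H_c(\MM_{g,n})$ of \cite{CGP2}, pushed up one weight. The basic tool is the Deligne weight spectral sequence attached to the smooth Deligne--Mumford compactification $\MM_{g,n}\hookrightarrow\bMM_{g,n}$, whose boundary is a normal crossings divisor. Its $E_1$-page is assembled from the cohomology of the closed boundary strata, which are quotients of products $\prod_v \bMM_{g_v,n_v}$ indexed by stable dual graphs and glued by Gysin maps; because each $\bMM_{g_v,n_v}$ is smooth and proper its cohomology is pure, so weights are additive over the vertices of a dual graph and the spectral sequence respects the weight grading. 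Equivalently, and more conveniently for bookkeeping, one organizes this as a decorated graph complex of Getzler--Kapranov (Feynman transform) type, whose vertices carry the weight-graded compactly supported cohomology $\gr_\bullet H_c(\MM_{g_v,n_v})$ of the open strata and whose edges record the nodes; its weight-$w$ part converges to $\gr_w H_c(\MM_{g,n})$. Since $\bbS_n$ acts throughout by permuting the legs $1,\dots,n$, every step is manifestly $\bbS_n$-equivariant.

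First I would isolate the weight-two summand. Because weights add across vertices and, as recalled in the introduction, the weight-one compactly supported cohomology vanishes for every $(h,m)$, any term contributing to total weight two must have all but one vertex contributing weight zero, with a single vertex (or a single edge carrying a Tate-twisted weight-zero class) supplying the remaining weight two. The weight-zero contributions are governed by the Chan--Galatius--Payne analysis: they force those vertices to be genus zero and reproduce exactly the tropical graph-cohomology combinatorics already present in $G^{(g,n)}$, with the loops of the graph accounting for the genus. This is precisely the data recorded by the internal vertices and the $\epsilon$-decorated external legs of $X_{g,n}$, where fusing the $\epsilon$- and $\omega$-legs recovers the connected genus-$g$ gluing.

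Next I would pin down the single weight-two defect. The minimal generator of weight-two compactly supported cohomology is the class $\omega$ coming from $H^2_c(\MM_{1,1})\cong\Q(-1)$; a genus-one vertex degenerates to a self-loop in the fused graph, i.e.\ to a distinguished pair of half-edges, which is exactly why the label $\omega$ occurs twice. The stability and non-degeneracy conditions defining $X_{g,n}$ then match the constraints on the strata: connectivity and genus of the fused $\epsilon/\omega$-graph encode the genus-$g$ stable gluing, while forbidding $(\epsilon,\omega)$- and $(\omega,\omega)$-components rules out the unstable or empty strata that contribute nothing. Finally, the two pieces of the differential correspond to the two types of $E_1$-differential: $\delta_{split}$ is the vertex-splitting (edge un-contraction) map controlling the weight-zero tropical part, while $\delta_{join}$ is the Gysin/residue map relating strata of adjacent codimension by moving the weight-two class, i.e.\ by merging external legs.

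The hard part, I expect, is twofold. First, one must control the weight-two compactly supported cohomology of all the open strata $\MM_{g_v,n_v}$ tightly enough to prove that the only $E_1$-contributions in weight two are those recorded by the $\epsilon$- and $\omega$-decorations; this needs the vanishing of weight one together with a precise description of the low-weight (equivalently, by Poincar\'e duality, near-top-weight) cohomology of moduli spaces, which is where the genuine geometric input lies. Second, one must show that the weight-two page degenerates, so that its cohomology is genuinely $\gr_2 H_c(\MM_{g,n})$ rather than merely a subquotient, and one must verify the orientation and sign conventions so that $\delta_{split}+\delta_{join}$ agrees on the nose with the spectral-sequence differential. The exceptional case $(g,n)=(1,1)$ is exactly the base of this analysis: there the class $\omega$ is itself the entire answer and is not produced by any graph with internal structure, so it must be excluded.
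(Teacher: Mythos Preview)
The paper does not prove this statement at all: it is quoted verbatim as Theorem~1.1 of \cite{PayneWillwacher} and used as a black box (via Corollary~\ref{cor:omega2 and X}) to convert the Euler-characteristic computation for $X_{g,n}$ into one for $\gr_2 H_c(\MM_{g,n})$. So there is no ``paper's own proof'' to compare your proposal to; what you have written is an attempt to reconstruct the argument of the cited preprint, not of the paper under review.

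That said, as a sketch of the cited result your outline is in the right spirit (weight spectral sequence for the normal-crossings boundary of $\bMM_{g,n}$, organized as a Getzler--Kapranov/Feynman-transform graph complex, with $\delta_{split}$ and $\delta_{join}$ matching the two kinds of $E_1$-differentials), but one step is stated incorrectly. You write that the weight-zero contributions ``force those vertices to be genus zero.'' This is not true: $\gr_0 H_c(\MM_{h,m})$ is nonzero for many $(h,m)$ with $h\geq 1$---that is exactly the content of the Chan--Galatius--Payne identification $\gr_0 H_c(\MM_{h,m})\cong H(G^{(h,m)})$ you invoke. So you cannot simply discard higher-genus vertices carrying weight-zero decorations; the reduction of the weight-two $E_1$-page to the specific complex $X_{g,n}$ requires a finer analysis of how those weight-zero classes on positive-genus strata interact with the single weight-two insertion, and this is where the real work of \cite{PayneWillwacher} lies. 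Your identification of the weight-two generator with the class on $\MM_{1,1}$ and of the two $\omega$-legs with a broken self-loop is the right intuition, but the argument that \emph{only} these contributions survive is not the tautology your sketch suggests.
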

To show Theorem \ref{thm:eulerchar} we may hence compute the equivariant Euler characteristic of the graph complexes $X_{g,n}$, that is, we may use the following corollary.

\begin{cor}\label{cor:omega2 and X}
\[
\omega_2 = \hbar p_1 + \sum_{g,n \atop 2g+n\geq 3} \hbar^n \chi^\bbS(X_{g,n}) 
\]
\end{cor}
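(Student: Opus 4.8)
The plan is to deduce the corollary directly from Theorem~\ref{thm:weight2}, using only the invariance of equivariant Euler characteristics under passage to cohomology, and then to treat by hand the single pair $(g,n)=(1,1)$ that Theorem~\ref{thm:weight2} excludes. Since $X_{g,n}$ is, in each cohomological degree, a finite-dimensional $\bbS_n$-representation, taking cohomology does not change the $\bbS_n$-equivariant Euler characteristic, so $\chi^\bbS(X_{g,n}) = \chi^\bbS(H(X_{g,n}))$. For every pair with $2g+n\geq 3$ and $(g,n)\neq(1,1)$, Theorem~\ref{thm:weight2} gives an $\bbS_n$-equivariant isomorphism $H(X_{g,n})\cong \gr_2 H_c(\MM_{g,n})$, and hence $\chi^\bbS(X_{g,n}) = \chi^\bbS_2(\MM_{g,n})$.

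I would then insert this identity into the definition~\eqref{equ:omega2def} of $\omega_2$ (with $\hbar$ counting the genus), separating off the exceptional term at $(1,1)$:
\[
\omega_2
= \chi^\bbS_2(\MM_{1,1})\,\hbar
+ \sum_{\substack{g,n\,:\,2g+n\geq 3\\ (g,n)\neq(1,1)}} \chi^\bbS(X_{g,n})\,\hbar^g .
\]
To rewrite the right-hand side as a sum over \emph{all} pairs with $2g+n\geq 3$, as in the statement, I would add and subtract the genus-one term $\chi^\bbS(X_{1,1})\,\hbar$, so that the entire discrepancy is the single correction $\hbar\bigl(\chi^\bbS_2(\MM_{1,1})-\chi^\bbS(X_{1,1})\bigr)$.

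It remains to evaluate the two genus-one quantities. The coarse space of $\MM_{1,1}$ is the affine $j$-line, so its weight two compactly supported cohomology is one-dimensional, concentrated in the even degree two and carrying the trivial $\bbS_1$-action; thus $\chi^\bbS_2(\MM_{1,1}) = s_1 = p_1$, in agreement with Figure~\ref{fig:eulerchartable}. For the combinatorial side I would enumerate the admissible generators of $X_{1,1}$ and check that the valence, connectivity-and-genus, and forbidden-component conditions together rule them all out (or that the survivors cancel in the Euler characteristic), giving $\chi^\bbS(X_{1,1})=0$. The correction then collapses to $\hbar p_1$, which is exactly the asserted formula.

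The entire argument is formal once Theorem~\ref{thm:weight2} is in hand, so I expect the only genuine obstacle to be the honest verification that $\chi^\bbS(X_{1,1})=0$. This is a finite but slightly delicate case analysis, and it is precisely the origin of the additive correction term $\hbar p_1$ in the corollary; everything else follows from the additivity of Euler characteristics and a reindexing of the generating series.
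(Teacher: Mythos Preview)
Your approach is correct and is essentially the same as the paper's, which treats the corollary as an immediate consequence of Theorem~\ref{thm:weight2} together with the exceptional value at $(g,n)=(1,1)$; the paper does not spell out a proof beyond noting, in the proof of Theorem~\ref{thm:eulerchar}, that ``there is a further correction $\hbar p_1$ arising from the contribution of $g=n=1$.'' Your reading of the exponent as $\hbar^g$ (rather than the $\hbar^n$ printed in the statement) is the intended one, matching the definition~\eqref{equ:omega2def}.

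One small sharpening: you do not need to hedge about whether $\chi^\bbS(X_{1,1})$ vanishes ``by cancellation.'' In fact $X_{1,1}=0$ as a vector space. A generator would be a simple graph with external labels $1,\omega,\omega$ and possibly some $\epsilon$'s such that fusing the $\epsilon$- and $\omega$-vertices yields a connected genus~$1$ graph. Writing $I$ for the number of internal vertices and $E$ for the number of edges, the genus condition forces $E=I+2$, while the valence constraints $2E\geq 3I + (\#\text{externals})$ and $\#\text{externals}\geq 3$ force $I\leq 1$. The case $I=0$ is a two-edge matching on labels $\{1,\omega,\omega,\epsilon\}$, and every such matching has a forbidden $(\epsilon,\omega)$- or $(\omega,\omega)$-edge component. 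The case $I=1$ gives only the tripod with legs $1,\omega,\omega$, which is killed by the orientation relation since the automorphism swapping the two $\omega$-legs reverses the order on the two structural edges. Hence $X_{1,1}=0$, and your correction term is exactly $\hbar\,\chi^\bbS_2(\MM_{1,1})=\hbar p_1$, as claimed.
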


\subsection{Two variants of $X_{g,n}$}
We shall also consider variant of the complexes $X_{g,n}$ above. 
First, we define the complexes $\tX_{g,n}$ in the same manner as above, except that we treat all edges in the same way, and declare the degree to be the total number of edges and the ordering of edges to be defined on all edges.  In other words,  
\begin{equation}\label{equ:tXX}
\tX_{g,n} = X_{g,n}\otimes \sgn_n[-n-1].  
\end{equation}
These graph complexes assemble naturally into symmetric sequences
\[
 X(r) := \bigoplus_g X_{g,r} \quad \quad \quad \quad \mbox{ and } \quad \quad \quad \quad \tX(r) := \bigoplus_g \tX_{g,r}. 
\]

We consider also the enlargement $\ftX_{g,n}\supset \tX_{g,n}$ obtained by relaxing the connectedness assumption on generating graphs.  In other words, $\ftX_{g,n}$ is defined just as $\tX_{g,n}$ except that the generators are allowed to be possibly disconnected graphs of genus $g$ without vertices of valence 2, with external vertices labeled by symbols $\{1,\dots,n,\epsilon,\omega\}$, such that each label $1,\dots,n$ appears exactly once and every connected component contains at least one external vertex.  The generators are oriented by a total ordering of the edges set.  Note that we allow graphs with any number of $\epsilon$- or $\omega$-legs, with $(\epsilon,\omega)$- and $(\omega,\omega)$-edges, and we also allow disconnected graphs. The complexity of a generator for $\ftX_{g,n}$ is the genus of the graph obtained by fusing all external vertices.  We again assemble the graph complexes $\ftX_{g,n}$ into a symmetric sequence 
\[
  \ftX(r) := \bigoplus_g \ftX_{g,r}.
\]

\begin{lemma}\label{lem:ftX}
We have an isomorphism of symmetric sequences
\[
  \ftX \cong (\DS \boxtimes \Com_1\boxtimes \Com_1) \otimes_\bbS \ftG.
\]
with $\DS$ the bisymmetric sequence \eqref{equ:DSdef}, and with $\boxtimes$ and $\otimes_\bbS$ operating on the first symmetric sequence structure.
\end{lemma}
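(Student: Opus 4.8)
\emph{Proof plan.} The plan is to present both sides as spaces spanned by the same decorated graphs, and then to check that the identification respects the $\bbS$-actions, the edge orderings (orientations), the cohomological degree, and the complexity grading.

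First I would unpack the right-hand side. Combining the definition of $\boxtimes$ with the induction--restriction adjunction $(\Ind_H^G W)\otimes_G V\cong W\otimes_H V$, the arity-$s$ part of the right-hand side is
\[
\bigoplus_{m\geq 0}\;\bigoplus_{a+b+c=m}
\big(\DS(a,s)\otimes\Com_1(b)\otimes\Com_1(c)\big)\otimes_{\bbS_a\times\bbS_b\times\bbS_c}\ftG(m),
\]
where $\bbS_a\times\bbS_b\times\bbS_c$ acts on $\ftG(m)$ by restricting the $\bbS_m$-action permuting external vertices, and $\DS(a,s)=\Q[\bbS_a]$ (nonzero only for $a=s$) carries its second $\bbS_s$-structure throughout. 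Graphically, an element is a graph $\gamma\in\ftG(m)$ whose $m$ univalent external vertices are partitioned into three blocks of sizes $a,b,c$: the first block is relabeled bijectively by $\{1,\dots,s\}$ via the group-ring factor $\DS$ (forcing $a=s$), while the remaining $b$ and $c$ external vertices are declared to be $\epsilon$-legs and $\omega$-legs respectively and are left unordered, the number of each being arbitrary since we use $\Com_1$ (which is nonzero in arity $0$).

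This is exactly the combinatorial data of a generator of $\ftX$. I would therefore define the comparison map by sending a generator of $\ftX$ to its underlying graph, viewed as an element of $\ftG(m)$ after choosing an auxiliary bijective labeling of all $m$ of its external vertices, together with the decoration recording which legs are numbered (and how), which are $\epsilon$, and which are $\omega$; passing to $\bbS_m$-coinvariants removes the dependence on the auxiliary labeling. That this is a bijection on generators then reduces to matching the defining conditions on the two sides: univalence of external vertices, absence of valence-$2$ vertices, the requirement that every connected component meet the external vertices, the identity $g=e-v+1$ for the genus, and the complexity grading---here one uses that fusing all external vertices of $\gamma\in\ftG^{(g,m)}$ produces a connected graph of genus $g+m-1$, which is the complexity of the corresponding $\ftX$ generator.

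The delicate point, and the real content of the lemma, is the compatibility of orientations with the $\bbS$-actions, because the external legs are themselves edges. Permuting two external legs of $\gamma$ transposes two edges and hence introduces the orientation sign prescribed by the edge ordering; that is, the $\bbS_m$-action on $\ftG(m)$ is sign-twisted on the legs. I would stress that this built-in sign is precisely what makes the undecorated factors behave correctly. For the $\epsilon$- and $\omega$-legs, tensoring the \emph{trivial} representations $\Com_1(b)$ and $\Com_1(c)$ over $\bbS_b$ and $\bbS_c$ forms coinvariants of this sign-carrying action, reproducing exactly the rule in $\ftX$ that interchanging two like-labeled legs multiplies the graph by the orientation sign (so that symmetric configurations of like legs vanish on both sides). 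For the numbered legs, the group ring $\DS=\Q[\bbS_a]$ turns the sign-twisted $\bbS_a$-leg-action into a free action whose retained $\bbS_s$-structure is identified with the marked-point action on $\ftX$; in particular the $\bbS_n$-action on $\ftX$ correctly inherits the leg-permutation sign. Since $\DS$ and $\Com_1$ are concentrated in degree zero, the cohomological degree (the number of edges) and the edge ordering descend unchanged from $\ftG$.

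The main obstacle I anticipate is precisely this sign bookkeeping through $\otimes_\bbS$: verifying that \emph{trivial} coefficients on the $\Com_1$ factors, together with the sign already present in the leg-permutation action on $\ftG$, yield the correct $\epsilon$/$\omega$ symmetry, and that the group-ring factor $\DS$ transports this sign onto the $\bbS_n$-action. Once this dictionary is in place, the remaining verifications---the bijection of graphs and the matching of degree and genus---are routine.
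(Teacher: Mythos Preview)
Your proposal is correct and follows the same approach as the paper: both sides are unpacked to reveal that they describe identical spaces of decorated graphs with matching orientations and gradings. The paper's own proof is the single sentence ``Unpacking the notation, both sides are the same,'' so your treatment---particularly the careful discussion of why the trivial $\Com_1$ factors tensored against the sign-carrying leg action on $\ftG$ reproduce the correct $\epsilon$/$\omega$ symmetry in $\ftX$---supplies exactly the detail the paper omits.
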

\begin{proof}
Unpacking the notation, both sides are the same.
\end{proof}

\section{Euler characteristics}
\label{sec:eulerchar}

The goal of this section is to prove Theorem \ref{thm:eulerchar}. The strategy of the proof is to first compute the equivariant Euler characteristic of the enlarged complexes $\ftX_{g,n}$, and then extract from this the Euler characteristic of $\tX_{g,n}$.  We then deduce the Euler characteristic of $X_{g,n}$, and hence that of $\gr_2H_c(\MM_{g,n})$.

The proof uses some elementary but tedious computations with power series arising from the asymptotic expansions of polygamma functions. We perform these computations in \S \ref{sec:aux comp}, in order not to clutter the main argument line of \S \ref{sec:eulerchar proof}. The reader is encouraged to skip \S \ref{sec:aux comp} on the first reading.
 
\subsection{Auxiliary computations of derivatives}
\label{sec:aux comp}

 We will use the following identity involving gamma functions. 

 \begin{prop}[after Songhafouo Tsopm\'en\'e and Turchin \cite{TT2}]\label{prop:Udef}
 Let $X$ and $u$ be formal variables and $\ell\in \mathbb N$.
 Abbreviate 
 \begin{align}
 E_\ell&:= \frac 1 \ell \sum_{d\mid \ell}\mu(\ell/d)\frac 1 {u^d}
 &
 \lambda_\ell &:= u^\ell (1-u^\ell)\ell.
 \end{align}
 Then we have the equality of formal power series in $u$
 \begin{equation}\label{equ:propUdef}
    (1+\p_a)^{X}e^{-a}(1+ \lambda_\ell a)^{E_\ell}\mid_{a=0}
    =
    \frac {(-\lambda_\ell)^X \Gamma(-E_\ell+X) }{\Gamma(-E_\ell )}=: U_\ell(X,u),
 \end{equation}
 where on the right-hand side one inserts the asymptotic expansions as $u\to 0-$, and on the left-hand side one defines $(1+\p_a)^{X}:=\exp(-X\sum_{j\geq 1} \frac {(-1)^{j}} j \p_a)$. Furthermore, the coefficient of each power of $u$ in the series on either side is a polynomial in $X$.
\end{prop}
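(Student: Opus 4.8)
The plan is to prove the identity by polynomial interpolation in the variable $X$, after checking that both sides are well-defined formal power series in $u$ whose coefficient of each monomial $u^N$ is a polynomial in $X$. Throughout write $\lambda=\lambda_\ell$, $E=E_\ell$ and $h(a)=(1+\lambda a)^E$, so that the left-hand side is $\Phi(X):=(1+\p_a)^X\bigl(e^{-a}h(a)\bigr)\big|_{a=0}=\sum_{k\ge 0}\binom Xk\,c_k$, where $c_k:=\p_a^k\bigl(e^{-a}h\bigr)\big|_{a=0}=k!\,[t^k]\,e^{-t}(1+\lambda t)^E$.

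First I would treat the nonnegative integers. The elementary identity $(1+\p_a)\bigl(e^{-a}h\bigr)=e^{-a}h'$ gives, by induction, $(1+\p_a)^m\bigl(e^{-a}h\bigr)=e^{-a}h^{(m)}$ for every integer $m\ge 0$, whence $\Phi(m)=h^{(m)}(0)=E(E-1)\cdots(E-m+1)\,\lambda^m$. On the other side, for integer $m$ the asymptotic-expansion recipe applied to $\Gamma(-E+m)/\Gamma(-E)$ reproduces the exact polynomial $(-E)(-E+1)\cdots(-E+m-1)$, so the right-hand side evaluates to $(-\lambda)^m\prod_{i=0}^{m-1}(-E+i)=\lambda^m E(E-1)\cdots(E-m+1)$. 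Thus both sides agree for all $X=m\in\Z_{\ge 0}$.

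Next I would establish that the two sides are power series in $u$ with polynomial-in-$X$ coefficients. For the left-hand side the key estimate is a lower bound on the $u$-adic order of $c_k$. Writing $\log\bigl(e^{-t}(1+\lambda t)^E\bigr)=\sum_{m\ge 1}b_m(u)\,t^m$ and using $\lambda E=1+O(u)$ together with $\lambda=O(u^\ell)$, one finds $b_1=E\lambda-1=O(u)$ and $b_m=\tfrac{(-1)^{m-1}}m E\lambda^m=O(u^{(m-1)\ell})$ for $m\ge 2$; since $\mathrm{ord}_u b_m\ge\max(1,(m-1)\ell)\ge m/2$, the exponential formula yields $\mathrm{ord}_u c_k\ge k/2$. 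Consequently $\Phi(X)=\sum_k\binom Xk c_k$ is a well-defined power series in $u$, and the coefficient of $u^N$ is the finite sum $\sum_{k\le 2N}\binom Xk\,[u^N]c_k$, a polynomial in $X$. For the right-hand side I would use the standard Stirling-type asymptotic expansion $\log\frac{\Gamma(z+X)}{\Gamma(z)}=X\log z+\sum_{n\ge1}\frac{c_n(X)}{z^{n}}$ with each $c_n(X)$ a polynomial in $X$ (a difference of Bernoulli polynomials), which is exactly the content that connects to the functions $\psi_0,\psi_1$ of \eqref{equ:psi def}. Substituting $z=-E$, the prefactor combines as $(-\lambda)^X(-E)^X=(\lambda E)^X=(1+O(u))^X$, cancelling the non-integral powers $u^{\pm\ell X}$ and leaving a genuine power series in $u$; since $1/(-E)=O(u^\ell)$, only finitely many $n$ contribute to each $u^N$, so the $u^N$-coefficient is again polynomial in $X$.

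Finally, for each $N$ the coefficient of $u^N$ on the two sides is a polynomial in $X$, and these polynomials agree at every nonnegative integer; hence they coincide, proving the identity and simultaneously the last assertion of the proposition. I expect the main obstacle to be the bookkeeping of the third paragraph: establishing the growth $\mathrm{ord}_u c_k\ge k/2$ on the left, and the exact cancellation of the non-integral powers $u^{\ell X}$ against the leading factor $(-E)^X$ of the Gamma asymptotics on the right, so that both sides genuinely lie in $\Q\llbracket u\rrbracket$ with polynomial coefficients. Once this is in place, the interpolation step and the matching at integers are immediate.
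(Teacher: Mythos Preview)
Your proposal is correct and follows essentially the same strategy as the paper's proof: verify that each side is a power series in $u$ whose $u^N$-coefficient is a polynomial in $X$, then match the two sides at all $X=m\in\Z_{\ge 0}$ via $(1+\p_a)^m(e^{-a}h)=e^{-a}h^{(m)}$ and the $\Gamma$-recurrence. The paper simply asserts the polynomiality step (``one checks''), whereas you supply the details --- the order estimate $\mathrm{ord}_u c_k\ge k/2$ on the left and the cancellation $(-\lambda)^X(-E)^X=(\lambda E)^X=(1+O(u))^X$ on the right --- so your write-up is, if anything, more complete on that point.
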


Let us note that, from the standard recurrence relation $\Gamma(z+1)=z\Gamma(z)$, one obtains the formula 
 \begin{equation}\label{equ:Urecurrence}
     U_\ell(X+p,u)=(-\lambda_\ell)^p (-E_\ell+X)(-E_\ell+X+1)\cdots (-E_\ell+X+p-1) U_\ell(X,u),
 \end{equation}
for any non-negative integer $p$. 

The formula \eqref{equ:propUdef} is a variation of a formula found in \cite[\S 2.2]{TT2} and \cite[Proposition 15.7]{Turchin}. The only difference is that in loc. cit. one replaces $\lambda_\ell$ by $\ell u^\ell$. However, this change does not alter the proof of the formula. Hence we shall only sketch the derivation (due to Turchin) here. 

\begin{proof}[Proof of Proposition \ref{prop:Udef}]
First, one checks that both sides of \eqref{equ:propUdef} are power series in $u$ with coefficients that are polynomials in $X$. Since any polynomial is completely determined by its values on non-negative integers, it is sufficient to show \eqref{equ:propUdef} for $X=0,1,2,\dots$.
For $X=0$ the identity \eqref{equ:propUdef} is trivial -- both sides are $1$.
For $X=p$ a non-negative integer one has, using \eqref{equ:Urecurrence},
\[
U_\ell(p,u) = \lambda_\ell^p E_\ell(E_\ell-1)\cdots (E_\ell-p+1).
\]
Similarly, the left-hand side of \eqref{equ:propUdef} becomes
\begin{align*}
&(1+\p_a)^{p}e^{-a}(1+ \lambda_\ell a)^{E_\ell}\mid_{a=0}
=\p_a^p(1+ \lambda_\ell a)^{E_\ell}\mid_{a=0}
=\lambda_\ell^p E_\ell(E_\ell-1)\cdots (E_\ell-p+1),
\end{align*}
and the proposition follows.
\end{proof}
 
 Recall Stirling's asymptotic expansion of the $\Gamma$ function 
 \begin{equation}
 \label{equ:Gamma asymptotic} 
 \log\Gamma(z) \sim (z-\frac 1 2)\log z -z +\frac12 \log(2\pi) +B(z)
 \quad \text{as $z\to \infty$} \\
 \end{equation}
 with 
 \begin{equation} \label{equ:B def}
  B(z) = \sum_{r\geq 2}\frac{B_r}{r(r-1)} \frac 1 {z^{r-1}}.
\end{equation}
Using this expansion one can write $U_\ell(X,u)$ more explicitly as
 \[
 \log U_\ell(X,u)
 =   
 \log \frac {(-\lambda_\ell)^X \Gamma(-E_\ell+X) }{\Gamma(-E_\ell)}
     =
     X\left(\log(\lambda_\ell E_\ell)-1 \right)+(-E_\ell+X-\frac 1 2 )\log(1-\frac X{E_\ell}) + B(-E_\ell+X)- B(-E_\ell).
 \]

 Furthermore, note that we may compute, for non-negative integers $k$,
 \[
 (\log(1+\p_a))^k(1+\p_a)^{X}e^{-a}(1+\lambda_\ell a)^{E_\ell}\mid_{a=0}
 =
 \p_X^k U_\ell(X,u).
 \]
 
 For later use, let us list the following special cases: 
 \begin{multline}
     \label{equ:Ueqns_log}
      (\log(1+\p_a))(1+\p_a)^{X}e^{-a}(1+\lambda_\ell a)^{E_\ell}\mid_{a=0} 
     =\partial_X U_\ell(X,u)=
     \left( \log(\lambda_\ell(E_\ell-X)) + \psi_0(-E_\ell+X) \right) U_\ell(X,u)  
 \end{multline}
 \begin{multline}
     \label{equ:Ueqns_log1}
     (\log(1+\p_a))(1+\p_a)^{X+1}e^{-a}(1+\lambda_\ell a)^{E_\ell}\mid_{a=0} 
     =\partial_X U_\ell(X+1,u)
     \\=
     \left( \log(\lambda_\ell(E_\ell-X)) + \psi_0(-E_\ell+X) + \frac 1 {-E_\ell+X} \right) (-\lambda_\ell)(-E_\ell+X) U_\ell(X,u) 
 \end{multline}
 \begin{multline}
     \label{equ:Ueqns_loglog}
     (\log(1+\p_a))^2(1+\p_a)^{X}e^{-a}(1+\lambda_\ell a)^{E_\ell}\mid_{a=0} 
     =\partial_X^2 U(X,u)
     \\=
     \left( (\log(\lambda_\ell(E_\ell-X))+\psi_0(-E_\ell+X))^2 + \psi_1(-E_\ell+X)    \right) U_\ell(X,u) 
 \end{multline}
 \begin{multline}
     \label{equ:Ueqns_loglog1}
     (\log(1+\p_a))^2(1+\p_a)^{X+1}e^{-a}(1+\lambda_\ell a)^{E_\ell}\mid_{a=0} 
     =\partial_X^2 U_\ell(X+1,u)
     \\=
     \left( \left(\log(\lambda_\ell(E_\ell-X))+\psi_0(-E_\ell+X) + \frac 1 {-E_\ell+X}\right)^2 + \psi_1(-E_\ell+X) - \frac{1}{(-E_\ell+X)^2}    \right) (-\lambda_\ell)(-E_\ell+X) U_\ell(X,u)
 \end{multline}
 
 Here we use the recurrence relation \eqref{equ:Urecurrence}
 and the notation \eqref{equ:psi def} for the digamma and trigamma series.

\subsection{Euler characteristic of $\ftX$}
\label{sec:eulerchar proof}

The graded vector space $\fHHGC$ has additional gradings from the complexity-grading, and from the number of $\epsilon$ and $\omega$-legs.
We can hence define the trigraded $\bbS$-equivariant Euler characteristic 
\[
\chi^{u,v,w}(\fHHGC) \in \Lambda\llbracket u,v,w \rrbracket,
\]
with $u, v,w$ being the formal variables tracking the complexity and the number of $\epsilon$- and $\omega$-legs, respectively. In other words, the Euler characteristic of the subcomplex with $k$ $\epsilon$- and $l$ $\omega$-legs and complexity $m$ is the coefficient of $u^mv^kw^l$ in the formal power series.
Then we have:

\begin{prop}\label{prop:eulerchar ftX}
  The equivariant Euler characteristic of $\ftX$ is 
  \[
    \chi^{u,v,w}(\ftX) = \prod_\ell U_\ell\left(\frac 1 \ell \sum_{d\mid \ell} \mu(\ell/d) (p_d +v^d+w^d ), u \right)
  \]
  with the function $U_\ell$ defined in Proposition \ref{prop:Udef} above.  
\end{prop}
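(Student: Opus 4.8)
The plan is to reduce the computation, via Lemma~\ref{lem:ftX} together with a graded version of Proposition~\ref{prop:iHom Gr}, to a single inner product that factorizes over the power sums, and then to recognize each factor as a value of the function $U_\ell$ from Proposition~\ref{prop:Udef}. By Lemma~\ref{lem:ftX} we have $\ftX\cong(\DS\boxtimes\Com_1\boxtimes\Com_1)\otimes_\bbS\ftG$, where $\boxtimes$ and $\otimes_\bbS$ act on the ``contracted'' symmetric-sequence structure, whose power sums I will write $q_1,q_2,\dots$; the output arity is carried by the second slot of $\DS$ with power sums $p_1,p_2,\dots$, and the two factors $\Com_1$ record the $\epsilon$- and $\omega$-legs. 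I would weight the arity-$r$ parts of these factors by $v^r$ and $w^r$, which is the substitution $q_\ell\mapsto v^\ell q_\ell$, respectively $q_\ell\mapsto w^\ell q_\ell$, in $\chi^\bbS(\Com_1)$. The key structural observation is that the whole derivation of Proposition~\ref{prop:iHom Gr} in \S\ref{sec:iHom Gr proof} applies verbatim in the presence of these extra gradings, because the Koszul identity \eqref{equ:Com Koszul}, Lemma~\ref{lem:circ adjoint} and Theorem~\ref{thm:Graphs qiso} all operate only on the contracted structure and the complexity, carrying $p,v,w$ along as inert passengers. This would yield
\[
  \chi^{u,v,w}(\ftX)
  =
  \bigl\langle
    \chi^{v,w}(\DS\boxtimes\Com_1\boxtimes\Com_1)\circ\chi^\bbS(\Lie_0),\
    \chi^u(\croBV_0)
  \bigr\rangle,
\]
where the plethysm $\circ$ and the pairing $\langle\,,\rangle$ are taken in the contracted variables $q$, and $\chi^\bbS(\Lie_0)$, $\chi^u(\croBV_0)$ are read from \eqref{equ:euler Lie} and \eqref{equ:euler crBV} with their power sums renamed $q_\ell$.

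I would then evaluate the outer symmetric function. Using $\chi^\bbS(\AOp\boxtimes\BOp)=\chi^\bbS(\AOp)\chi^\bbS(\BOp)$, the Cauchy form $\chi^\bbS(\DS)=\exp\bigl(\sum_n\frac1n q_np_n\bigr)$ and $\chi^\bbS(\Com_1)=\exp\bigl(\sum_\ell\frac{q_\ell}\ell\bigr)$, the $v,w$-weighting gives
\[
  \chi^{v,w}(\DS\boxtimes\Com_1\boxtimes\Com_1)
  =
  \exp\!\left(\sum_{\ell\ge1}\frac{q_\ell}{\ell}\,(p_\ell+v^\ell+w^\ell)\right).
\]
Since $f\mapsto f\circ\chi^\bbS(\Lie_0)$ is a continuous ring homomorphism in the $q$-variables that fixes the passengers $p,v,w$ and sends $q_\ell\mapsto\sum_{m\ge1}\frac{\mu(m)}m\log(1+q_{\ell m})$ (by \eqref{equ:plethism euler 1} and \eqref{equ:euler Lie}), regrouping the resulting double sum according to $N=\ell m$ produces
\[
  \chi^{v,w}(\DS\boxtimes\Com_1\boxtimes\Com_1)\circ\chi^\bbS(\Lie_0)
  =
  \prod_{N\ge1}(1+q_N)^{\Xi_N},
  \qquad
  \Xi_N:=\frac1N\sum_{d\mid N}\mu(N/d)(p_d+v^d+w^d).
\]
Here $\Xi_N$ is exactly the argument appearing in the statement.

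It then remains to compute the pairing. By the adjunction $\langle q_k f,g\rangle=\langle f,k\partial_{q_k}g\rangle$ underlying \eqref{equ:prod def}, the pairing equals the differential operator $\prod_N(1+N\partial_{q_N})^{\Xi_N}$ applied to $\chi^u(\croBV_0)$ and evaluated at $q=0$. Writing \eqref{equ:euler crBV} in product form, $\chi^u(\croBV_0)+1=\prod_\ell e^{-q_\ell/\ell}(1+\lambda_\ell q_\ell/\ell)^{E_\ell}$ with $\lambda_\ell$ and $E_\ell$ as in Proposition~\ref{prop:Udef}, this operator factorizes over $N$, as each $q_N$ occurs in a single factor. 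Substituting $a=q_N/N$, so that $N\partial_{q_N}=\partial_a$, the $N$-th factor becomes precisely
\[
  (1+\partial_a)^{\Xi_N}\,e^{-a}(1+\lambda_N a)^{E_N}\big|_{a=0}
  =
  U_N(\Xi_N,u),
\]
by Proposition~\ref{prop:Udef}, and taking the product over $N$ gives the asserted formula. One must still account for the unit: because $\croBV_0$ has vanishing arity-zero part, the pairing alone produces $\prod_N U_N(\Xi_N,u)-1$, and the missing summand is supplied by the empty graph, the unique generator of $\ftX$ carrying no external decoration, which lies in complexity zero and contributes $u^0p^0v^0w^0=1$.

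The step I expect to require the most care is the graded refinement of Proposition~\ref{prop:iHom Gr}: one must verify that weighting the two $\Com_1$ factors by $v$ and $w$, and carrying the arity alphabet $p$ from the second slot of $\DS$, genuinely commute with every manipulation in \S\ref{sec:iHom Gr proof}, and in particular that the plethysm by $\chi^\bbS(\Lie_0)$ acts only on the contracted alphabet $q$ while leaving $p,v,w$ fixed as coefficients (rather than being raised, as a complexity-type grading on the \emph{inner} factor would be). Granting this, the remainder is the mechanical plethysm regrouping and the term-by-term application of Proposition~\ref{prop:Udef}, which is designed precisely to reproduce the factors emerging from the pairing.
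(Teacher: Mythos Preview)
Your proposal is correct and follows essentially the same route as the paper's own proof: both invoke Lemma~\ref{lem:ftX}, pass through the graded form of Proposition~\ref{prop:iHom Gr} to reach the inner product $\langle \chi^{v,w}(\DS\boxtimes\Com_1\boxtimes\Com_1)\circ\chi^\bbS(\Lie_0),\chi^u(\croBV_0)\rangle$, perform the same plethysm regrouping to obtain $\prod_\ell(1+q_\ell)^{\Xi_\ell}$, and then identify each factor of the resulting product with a value of $U_\ell$ via Proposition~\ref{prop:Udef}. The only cosmetic difference is that the paper uses $p$ for the contracted alphabet and $q$ for the output (renaming $q_d\to p_d$ at the very end), whereas you do the opposite; and you are slightly more explicit than the paper about the ``$-1$'' in $\chi^u(\croBV_0)$ and the empty-graph contribution, which the paper silently absorbs.
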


\begin{proof}

We note that
\[
\chi^{v,w}(\DS \boxtimes \Com_1\boxtimes \Com_1) 
= 
 \exp\left(\sum_{\ell\geq 1} \frac 1 \ell (q_\ell  +v^\ell +w^\ell ) p_\ell \right)
\in \Lambda_{p,q}\llbracket v,w \rrbracket.
\]
Hence 
\begin{align*}
\chi^{v,w}( (\DS \boxtimes \Com_1\boxtimes \Com_1)\circ \Lie_0)
&=
\exp\left( \sum_{\ell\geq 1} \sum_{k\geq 1}
\frac {\mu(k)} {\ell k} (q_\ell + v^\ell+w^\ell ) \log(1+p_{k\ell})\right)
\\&=
\exp\left( 
  \sum_{\ell\geq 1}
\frac 1 \ell \sum_{d\mid \ell} \mu(\ell/d) (q_d + v^d + w^d )\log(1+p_{\ell}) \right)
\\&=
\prod_{\ell\geq 1}
(1+p_{\ell})^{\frac 1 \ell \sum_{d\mid \ell} \mu(\ell/d) (q_d + v^d+ w^d )}.
\end{align*}

We hence compute, using Lemma \ref{lem:ftX} and Proposition \ref{prop:iHom Gr}:
\begin{align*}
  \chi^{u,v,w}(\fHHGC)
  &= \chi^{u,w}(( \DS \boxtimes \Com_1\boxtimes \Com_1) \otimes_{\bbS} \ftG ) 
  &
  \\&=
  \chi^{u,v,w}( (\DS \boxtimes \Com_1\boxtimes \Com_1)\circ \Lie_0)  \otimes_{\bbS} \cro\BV_0)
  &\text{by Proposition \ref{prop:iHom Gr}}
  \\&=
  \langle 
   \chi^{v,w}(\DS \boxtimes \Com_1\boxtimes \Com_1)\circ \chi(\Lie_0)), \chi^{u}(\croBV_0) 
  \rangle
  &\text{by \eqref{equ:tens prod}}
  \\&=
  \prod_\ell 
  (1+\partial_a)^{\frac 1 \ell \sum_{d\mid \ell} \mu(\ell/d) (q_d  +v^d+w^d )}
  e^{-a }(1+ u^\ell (1-u^\ell) \ell a)^{ \frac 1 \ell \sum_{d\mid \ell}\mu(\ell/d) \frac 1 {u^d} }
  \mid_{a=0}
  &\text{by \eqref{equ:prod def}}
  \\&=
  \prod_\ell U_\ell\left(\frac 1 \ell \sum_{d\mid \ell} \mu(\ell/d) (q_d +v^d+w^d ), u \right)
  .
\end{align*} 
In the last line we used the function $U_\ell$ of Proposition \ref{prop:Udef}.
Note that here we had to work with bisymmetric sequences and hence two sets of power sums $p_j$, $q_j$, so that in the final expression the formula is in $\Lambda_q=\Q\llbracket q_1,q_2,\dots \rrbracket$ instead of $\Lambda$. But the trivial replacement $q_d\to p_d$ yields the proposition. 
\end{proof}

We note that the part of $\ftX$ spanned by graphs without any $\epsilon$- or $\omega$-legs agrees with $\ftG$.
Hence we obtain, by setting $v=w=0$ in the formula of Proposition \ref{prop:eulerchar ftX}:
\begin{cor}
\[
  \chi^u(\ftG) = \chi^{u,v,w}(\fHHGC )(v=w=0)=
  \prod_\ell U_\ell\left(\frac 1 \ell \sum_{d\mid \ell} \mu(\ell/d) p_d , u \right)  
\]
\end{cor}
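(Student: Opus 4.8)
The plan is to recognize $\ftX$ as a decorated graph complex of the form handled by Proposition~\ref{prop:iHom Gr}, convert its equivariant Euler characteristic into a symmetric-function pairing, and then evaluate that pairing factor-by-factor using the gamma-function identity of Proposition~\ref{prop:Udef}. First I would apply Lemma~\ref{lem:ftX} to write $\ftX \cong (\DS\boxtimes\Com_1\boxtimes\Com_1)\otimes_\bbS\ftG$, so that $\ftX$ is exactly of the shape $\AOp\otimes_\bbS\ftG$ with decoration $\AOp=\DS\boxtimes\Com_1\boxtimes\Com_1$. The extra gradings counting $\epsilon$- and $\omega$-legs live entirely on the two $\Com_1$ factors of the decoration, so they are carried through the statement of Proposition~\ref{prop:iHom Gr} as inert bookkeeping variables $v,w$. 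Applying that proposition then gives
\[
\chi^{u,v,w}(\ftX) = \left\langle \chi^{v,w}(\AOp)\circ\chi^\bbS(\Lie_0),\ \chi^u(\croBV_0)\right\rangle,
\]
with $u$ counting complexity.

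Next I would compute the two inputs to the pairing. Since $\chi^\bbS$ is multiplicative for $\boxtimes$, and since $\chi^\bbS(\DS)=\exp(\sum_n\tfrac1n p_nq_n)$ while each $\Com_1$, weighted by its leg-counting variable, contributes $\exp(\sum_\ell \tfrac{v^\ell}{\ell}p_\ell)$ and $\exp(\sum_\ell\tfrac{w^\ell}{\ell}p_\ell)$ respectively, I obtain $\chi^{v,w}(\AOp)=\exp(\sum_\ell\tfrac1\ell(q_\ell+v^\ell+w^\ell)p_\ell)$. Substituting $\chi^\bbS(\Lie_0)=\sum_k\tfrac{\mu(k)}{k}\log(1+p_k)$ from \eqref{equ:euler Lie} into the power sums and using the plethysm rules \eqref{equ:plethism euler 1}--\eqref{equ:plethism euler 2}—here the coefficients $q_\ell,v^\ell,w^\ell$ are inert, since they do not appear in $\chi^\bbS(\Lie_0)$, so only $p_\ell$ transforms—a merging of the two summation indices followed by M\"obius inversion yields
\[
\chi^{v,w}(\AOp)\circ\chi^\bbS(\Lie_0)=\prod_\ell(1+p_\ell)^{Y_\ell},\qquad Y_\ell:=\tfrac1\ell\sum_{d\mid\ell}\mu(\ell/d)(q_d+v^d+w^d).
\]
The second input $\chi^u(\croBV_0)=\prod_\ell e^{-p_\ell/\ell}(1+u^\ell(1-u^\ell)p_\ell)^{E_\ell}$ is read off directly from \eqref{equ:euler crBV}, with $E_\ell$ and $\lambda_\ell=u^\ell(1-u^\ell)\ell$ as in Proposition~\ref{prop:Udef}.

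Finally I would evaluate the pairing. Because power sums with different indices are orthogonal for the inner product \eqref{equ:prod def}, the pairing factors as a product over $\ell$ of single-variable pairings in $p_\ell$. Writing each such pairing in the differential-operator form $\langle f,g\rangle = \big(f(\ell\partial_{p_\ell})\,g\big)\big|_{p_\ell=0}$ and then substituting $p_\ell=\ell a$ (so that $\ell\partial_{p_\ell}\mapsto\partial_a$, $p_\ell/\ell\mapsto a$, and $u^\ell(1-u^\ell)p_\ell\mapsto\lambda_\ell a$) turns the $\ell$-th factor into precisely the left-hand side of \eqref{equ:propUdef} with $X=Y_\ell$, which Proposition~\ref{prop:Udef} identifies with $U_\ell(Y_\ell,u)$. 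Multiplying over $\ell$ and performing the harmless renaming $q_d\mapsto p_d$, which passes from the bisymmetric ring $\Lambda_{p,q}$ back to $\Lambda$ and records the genuine $\bbS_n$-action on the labels, produces the asserted formula.

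\textbf{Main obstacle.} I expect the delicate part to be the bookkeeping in this last step: verifying that the infinite pairing genuinely splits as a product over $\ell$, and then executing the rescaling $p_\ell=\ell a$ so that every piece of $\chi^u(\croBV_0)$ lines up exactly with the left-hand side of \eqref{equ:propUdef}. One must also keep the two alphabets $p$ and $q$ separate throughout—only $p$ is contracted by the plethysm and the pairing, while $q$ survives to encode the symmetric group representation—and track which auxiliary variables ($v,w$, which are inert, versus $u$) could transform under plethysm. The surrounding manipulations (multiplicativity for $\boxtimes$, and the plethysm and M\"obius identities) are routine given the rules recalled in the preliminaries.
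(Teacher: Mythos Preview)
Your argument is correct and follows exactly the paper's route, but you have proved the wrong statement: what you have written is the paper's proof of Proposition~\ref{prop:eulerchar ftX}, the formula
\[
\chi^{u,v,w}(\ftX)=\prod_\ell U_\ell\Big(\tfrac1\ell\sum_{d\mid\ell}\mu(\ell/d)(p_d+v^d+w^d),\,u\Big),
\]
not the Corollary. The Corollary is about $\ftG$, and in the paper it is deduced from that Proposition in one line: the subspace of $\ftX$ spanned by graphs with no $\epsilon$- or $\omega$-legs is exactly $\ftG$ (such a graph has only numbered external legs, hence is a generator of $\ftG$), so extracting the coefficient of $v^0w^0$, i.e.\ setting $v=w=0$, yields $\chi^u(\ftG)=\prod_\ell U_\ell\big(\tfrac1\ell\sum_{d\mid\ell}\mu(\ell/d)p_d,\,u\big)$.

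Your proposal never takes this last step. You end at the formula with $Y_\ell=\tfrac1\ell\sum_{d\mid\ell}\mu(\ell/d)(p_d+v^d+w^d)$ and declare it ``the asserted formula,'' but the asserted formula has no $v$ or $w$ in the argument, and the first equality $\chi^u(\ftG)=\chi^{u,v,w}(\ftX)(v=w=0)$ is never addressed. The missing ingredient is trivial, but it is the actual content of the Corollary as opposed to the Proposition preceding it; add the one-sentence identification of $\ftG$ with the no-legs part of $\ftX$ and then specialize $v=w=0$.
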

\noindent We note that this corollary has been found earlier in \cite{TT2}.

\subsection{Euler characteristic of the connected part with two $\omega$-legs}\label{sec:eulerchar fXconn2}
Next we reduce the computation of the Euler characteristic of $\tX_{g,n}$ to that of $\ftX_{g,n}$.
As a first step we defined the sub-symmetric sequence 
$
  \fHHGC^{conn} \subset \fHHGC
$
generated by graphs in which each connected component has at least one $\epsilon$- or $\omega$-leg.
We then have 
\[
  \fHHGC = \fHHGC^{conn} \boxtimes \ftG.
\]
Hence the Euler characteristic of $\fHHGC^{conn}$ is
\[
  \chi^{u,v,w}(\fHHGC^{conn} )
  =
  \frac{ \chi^{u,v,w}(\fHHGC ) }
  {\chi^{u,v,w}(\fHHGC )(v=w=0) }
  =
  \prod_\ell 
  \frac { 
    U_\ell(\frac 1 \ell \sum_{d\mid \ell} \mu(\ell/d) (p_d  +v^d+w^d ), u )
  }
  { 
    U_\ell(\frac 1 \ell \sum_{d\mid \ell} \mu(\ell/d) p_d, u )
  }\, .
\]

Furthermore, we shall only be interested in the
subcomplexes
\[
  \fHHGC^{conn,j} \subset \fHHGC^{conn}
\]
spanned by graphs with $j=0,1,2$ $\omega$-legs, and we do not want to fix the number of $\epsilon$-legs since the latter is not invariant under the differential.
The relevant Euler characteristic is computed by the  coefficient of $w^j$, evaluated at $v=1$. 
For example, for $j=0$ we set $w=0$ to obtain 
\begin{align*}
  \chi^u(\fHHGC^{conn,0} )
  &=\chi^{u,v,w}(\fHHGC^{conn} )(v=1, w=0) 
\\&=
    \prod_{\ell \geq 1}
  \frac { 
    U_\ell(\frac 1 \ell \sum_{d\mid \ell} \mu(\ell/d) (p_d  +1), u)
  }
  { 
    U_\ell(\frac 1 \ell \sum_{d\mid \ell} \mu(\ell/d) p_d , u)
  }
\\
&=
\frac { 
    U_1( p_1  +1, u )
  }
  { 
    U_1( p_1 , u )
  }
 \ \ = \ \ 
  u(1-u)Y_1^-,
\end{align*}
where we used that $\sum_{d\mid \ell}\mu(\ell/d)=\delta_{1\ell}$, equation \eqref{equ:Urecurrence}, and the abbreviation
\[
  Y_\ell^\pm := \sum_{d\mid\ell}\mu(\ell/d) (u^{-d} \pm p_d)).
\]
Next, for $j=1$:
\begin{align*}
  \chi^u(\fHHGC^{conn,1} )
  &= \partial_w\mid_{w=0} \chi^{u,v,w}(\fHHGC^{conn} )(v=1) 
  \\&=
  \frac { 
    \p_X U_1( p_1  +1,  u )
  }
  { 
    U_1( p_1 , u )
  }
  + 
  \frac { 
    U_1( p_1  +1,  u )
  }
  { 
    U_1( p_1 , u )
  }
  \left(\sum_{\ell\geq 2} \frac{\mu(\ell)}{\ell}
  \frac { 
     \partial_X U_\ell(\frac 1 \ell \sum_{d\mid \ell} \mu(\ell/d) p_d, u)
  }
  { 
    U_\ell(\frac 1 \ell \sum_{d\mid \ell} \mu(\ell/d) p_d , u )
  }\right) 
\\&=
u(1-u)Y_1^-
\left(-\frac 1 {Y_1} +\sum_{\ell\geq 1} \frac{\mu(\ell)}{\ell}
  \left(\log(\ell u^\ell(1-u^\ell) Y_\ell) 
  + 
  \psi_0(- Y_\ell^-) \right)
  \right)
\end{align*}
To obtain the second line, we used again that $\sum_{d\mid \ell}\mu(\ell/d)=\delta_{1\ell}$.
The first term in the second line is the evaluated using \eqref{equ:Ueqns_log1}, while the other derivatives are computed using \eqref{equ:Ueqns_log}.
Factoring out the common factor $u(1-u)Y_1^-$ then produces the third line.
Similarly, one computes the second derivative.
\begin{equation*} 
\resizebox{.95\hsize}{!}{
$\begin{aligned}
 &\chi^u(\fHHGC^{conn,2} ) = \frac12 
  \partial_w^2\mid_{w=0} \chi^{u,v,w}(\fHHGC^{conn} )(v=1) 
  \\
  &=
  \frac 12  \frac { 
    \p_X^2 U_1( p_1  +1,  u )
  }
  { 
    U_1( p_1 , u )
  }
  + 
  \frac 12 
  \frac { 
    U_1( p_1  +1,  u )
  }
  { 
    U_1( p_1 , u )
  }
  \left(\sum_{\ell\geq 2} \left(\frac{\mu(\ell)}{\ell}\right)^2
  \frac { 
     \partial_X^2 U_\ell(\frac 1 \ell \sum_{d\mid \ell} \mu(\ell/d) p_d, u)
  }
  { 
    U_\ell(\frac 1 \ell \sum_{d\mid \ell} \mu(\ell/d) p_d , u )
  }\right)
  \\ &\quad +
  \frac { 
    U_1( p_1  +1,  u )
  }
  { 
    U_1( p_1 , u )
  }
  \left(\sum_{\ell\geq 2\atop 2\mid\ell} \frac{\mu(\ell/2)}{\ell}
  \frac { 
     \partial_X U_\ell(\frac 1 \ell \sum_{d\mid \ell} \mu(\ell/d) p_d, u)
  }
  { 
    U_\ell(\frac 1 \ell \sum_{d\mid \ell} \mu(\ell/d) p_d , u )
  }\right)
  +
  \frac { 
    \p_X U_1( p_1  +1,  u )
  }
  { 
    U_1( p_1 , u )
  }
  \left(\sum_{\ell\geq 2} \frac{\mu(\ell)}{\ell}
  \frac { 
     \partial_X U_\ell(\frac 1 \ell \sum_{d\mid \ell} \mu(\ell/d) p_d, u)
  }
  { 
    U_\ell(\frac 1 \ell \sum_{d\mid \ell} \mu(\ell/d) p_d , u )
  }\right)  \\&\quad
 +
  \frac { 
    U_1( p_1  +1,  u )
  }
  { 
    U_1( p_1 , u )
  }
  \sum_{\ell>\ell'\geq 2}
  \left(\frac{\mu(\ell)}{\ell}
  \frac { 
     \partial_X U_\ell(\frac 1 \ell \sum_{d\mid \ell} \mu(\ell/d) p_d, u)
  }
  { 
    U_\ell(\frac 1 \ell \sum_{d\mid \ell} \mu(\ell/d) p_d , u )
  }\right) 
  \left(\frac{\mu(\ell')}{\ell'}
  \frac { 
     \partial_X U_{\ell'}(\frac 1 {\ell'} \sum_{d\mid \ell'} \mu(\ell'/d) p_d, u)
  }
  { 
    U_{\ell'}(\frac 1 {\ell'} \sum_{d\mid \ell'} \mu(\ell'/d) p_d , u )
  }\right) 
  \\
  &= \frac12 u(1-u)Y_1^-
  \left[
    \left(-\frac 1 {Y_1^-} +\sum_{\ell\geq 1} \frac{\mu(\ell)}{\ell}
    \left(\log(\ell u^\ell(1-u^\ell) Y_\ell^-) 
    + 
    \psi_0(- Y_\ell^-) \right) \right)^2
  + 
  \sum_{\ell\geq 1} \frac{\mu(\ell)}{\ell}
  \left(\log(2\ell u^{2\ell}(1-u^{2\ell}) Y_{2\ell}^-) 
  + 
  \psi_0(- Y_{2\ell}^-) \right) 
  \right.
\\&\quad\quad\quad
  \left.
     -\frac 1 {(Y_1^-)^2} + \sum_{\ell\geq 1} \frac{\mu(\ell)^2}{\ell^2}
    \psi_1(-Y_\ell^-)
  \right]
    \end{aligned}$
    }
\end{equation*}

\noindent To obtain the final equality we evaluated all terms using \eqref{equ:Urecurrence} and \eqref{equ:Ueqns_log}-\eqref{equ:Ueqns_loglog1}, and simplified the resulting expression. The steps of the final simplification are elementary and are omitted here, only the end result is shown.

\subsection{Euler characteristic of $\tX$}

Here, we show:
\begin{prop}\label{prop:eulerchar tX}
\begin{align*}
\chi^u(\tX)
= 
\chi^u( \fHHGC^{conn,2} )
+
u \chi^u( \fHHGC^{conn,1} )
+(u+u^2) \chi^u( \fHHGC^{conn,0} )
\end{align*}
\end{prop}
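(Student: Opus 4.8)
The plan is to realize $\tX$ as the ``good'' part of $\fHHGC^{conn,2}$ and to account for the discrepancy by a short computation with the generating variable $w$ that counts $\omega$-legs. First I would unwind the definitions to locate $\tX$ inside $\fHHGC=\ftX$. A generator of $X_{g,n}$, equivalently of $\tX_{g,n}$, is a graph with exactly two $\omega$-legs such that merging all $\epsilon$- and $\omega$-legs gives a connected graph — which is exactly the requirement that every connected component carry at least one $\epsilon$- or $\omega$-leg, i.e.\ the condition defining $\fHHGC^{conn}$ — and such that no connected component is an $(\epsilon,\omega)$- or $(\omega,\omega)$-edge. Writing $\fHHGC^{conn}_{\mathrm{good}}$ for the subspace spanned by graphs with no such forbidden edge-component, I obtain an isomorphism of complexity-graded $\bbS$-representations $\tX\cong\fHHGC^{conn,2}_{\mathrm{good}}$. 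The gradings match because $\tX$ sits inside $\ftX$, and both the cohomological degree (the number of edges) and the complexity (the genus after fusing all external vertices) are inherited from $\ftX$; the differing genus labels of $X_{g,n}$ and $\ftX_{g,n}$ play no role once only the complexity is tracked through $u$.

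Next I would strip off the forbidden components. Each generator of $\fHHGC^{conn}$ decomposes uniquely as the disjoint union of its isolated $(\epsilon,\omega)$- and $(\omega,\omega)$-edge components with a remainder, and removing whole components preserves both the connectedness condition and goodness, so the remainder lies in $\fHHGC^{conn}_{\mathrm{good}}$. Hence $\fHHGC^{conn}\cong\Phi\boxtimes\fHHGC^{conn}_{\mathrm{good}}$, where $\Phi$ is the span of all disjoint unions of $(\epsilon,\omega)$- and $(\omega,\omega)$-edges. Since $\chi$ is multiplicative for $\boxtimes$, setting $F(w):=\chi^{u,v,w}(\fHHGC^{conn})\mid_{v=1}=\sum_j\chi^u(\fHHGC^{conn,j})\,w^j$ and $G(w):=\chi^{u,v,w}(\fHHGC^{conn}_{\mathrm{good}})\mid_{v=1}$ yields $F(w)=\Phi(w)\,G(w)$ with $\Phi(w):=\chi^{u,v,w}(\Phi)\mid_{v=1}$. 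By the identification above, $\chi^u(\tX)=[w^2]G(w)$, so everything reduces to computing $\Phi(w)$ modulo $w^3$.

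The key computation is that $\Phi$ is the free graded-commutative algebra under $\boxtimes$ on two odd generators: a single $(\epsilon,\omega)$-edge and a single $(\omega,\omega)$-edge. Each is odd, having one edge, and each becomes a self-loop after fusing all external vertices, hence has complexity $1$; their Euler-characteristic weights at $v=1$ are therefore $-uw$ and $-uw^2$. Because the generators are odd their squares vanish — equivalently, a graph containing two identical forbidden edge-components is killed by the sign of the edge-transposing automorphism exchanging them — so $\Phi(w)=(1-uw)(1-uw^2)$. Expanding $G(w)=F(w)(1-uw)^{-1}(1-uw^2)^{-1}$ and reading off the coefficient of $w^2$ gives $\chi^u(\tX)=\chi^u(\fHHGC^{conn,2})+u\,\chi^u(\fHHGC^{conn,1})+(u+u^2)\,\chi^u(\fHHGC^{conn,0})$, which is the assertion.

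The main obstacle is the sign bookkeeping behind $\Phi(w)=(1-uw)(1-uw^2)$: one must confirm that the forbidden components are genuinely odd and that the orientation by a total ordering of edges forces the graded-commutative — rather than naively commutative — monoid structure, since this is precisely what makes a repeated forbidden edge vanish and thereby fixes the coefficients $u$ and $u+u^2$. A secondary but necessary check is that the connectedness hypothesis in the definition of $X_{g,n}$ really agrees with the $\fHHGC^{conn}$ condition and that the complexity grading transports correctly across $\tX\cong\fHHGC^{conn,2}_{\mathrm{good}}$.
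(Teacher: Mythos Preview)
Your proposal is correct and follows essentially the same approach as the paper: both arguments identify $\tX$ with the part of $\fHHGC^{conn,2}$ having no $(\epsilon,\omega)$- or $(\omega,\omega)$-edge components and then correct for these forbidden components by inclusion--exclusion. The paper carries out the inclusion--exclusion by hand (first removing the $(\omega,\omega)$-edge, then the $(\epsilon,\omega)$-edge, then correcting for overcounting), whereas you package the same computation via the factorization $\fHHGC^{conn}\cong\Phi\boxtimes\fHHGC^{conn}_{\mathrm{good}}$ and the generating-function identity $\Phi(w)=(1-uw)(1-uw^2)$; this is a slightly slicker bookkeeping device but not a different idea.
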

\begin{proof}
The symmetric sequence $\tX$ differs from $\fHHGC^{conn,2}$ only in so far that generators in $\tX$ are not allowed to contain $(\epsilon,\omega)$- or $(\omega,\omega)$-edges, while we have not imposed such a condition in $\fHHGC^{conn,2}$. To show the proposition we hence have to correct for those graphs.

A general graph in $\fHHGC^{conn,2}$ with $(\omega,\omega)$-edge has the form 
\begin{align*}
  \begin{tikzpicture}
    \node (v1) at (0,0) {$\omega$};
    \node (v2) at (0.7,0) {$\omega$};
    \draw (v1) edge (v2);
    \node[draw, circle, minimum width=1cm, minimum height=.5cm] (v) at (2,0) {$\Gamma_0$};
  \end{tikzpicture},
\end{align*}
with the right-hand part $\Gamma_0$ a general graph in $\fHHGC^{conn,0}$. 
Hence to account for those graphs we have to subtract from $\chi^u(\fHHGC^{conn,2})$ 
the expression $-u \chi^u( \fHHGC^{conn,0} )$, the factor $-u$ accounting for the degree and genus shift introduced by the $(\omega,\omega)$-edge.

Similarly, a graph in $\fHHGC^{conn,2}$ with $(\epsilon,\omega)$-edge has the form 
\begin{align*}
  \begin{tikzpicture}
    \node (v1) at (0,0) {$\epsilon$};
    \node (v2) at (0.7,0) {$\omega$};
    \draw (v1) edge (v2);
    \node[draw, circle, minimum width=1cm, minimum height=.5cm] (v) at (2,0) {$\Gamma_1$};
  \end{tikzpicture},
\end{align*}
with $\Gamma_1\in \fHHGC^{conn,1}$, except for the caveat that $\Gamma_1$ must not contain an $(\epsilon,\omega)$-edge itself. But the graphs in $\fHHGC^{conn,1}$ with $(\epsilon,\omega)$-edge themselves have the form
\begin{align*}
  \begin{tikzpicture}
    \node (v1) at (0,0) {$\epsilon$};
    \node (v2) at (0.7,0) {$\omega$};
    \draw (v1) edge (v2);
    \node[draw, circle, minimum width=1cm, minimum height=.5cm] (v) at (2,0) {$\Gamma_0$};
  \end{tikzpicture},
\end{align*}
with $\Gamma_0$ again a general graph in $\fHHGC^{conn,0}$.
Adding and subtracting the corresponding Euler characteristics one has shown the Proposition.
\end{proof}

\subsection{Proof of Theorem \ref{thm:eulerchar}}
According to \eqref{equ:tXX} the complexes $\tX_{g,n}$ are obtained from $X_{g,n}$ by a degree shift and multiplication by the sign representation $\sgn_n$. Hence we find that
\[
\chi^u(X) = - \chi^u(\tX)(-q_n \leftarrow q_n),  
\]
with the notation meaning that each occurrence of $q_n$ should be replaced by $-q_n$ on the right-hand side.
But using Proposition \ref{prop:eulerchar tX} and the formulas of \S \ref{sec:eulerchar fXconn2}, we hence see that
\begin{equation*} 
\resizebox{.95\hsize}{!}{
$\begin{aligned}
  \chi^u(X)
    &= 
    \frac { u(u-1) Y_1^+} 2\left[
    \sum_{\ell\geq 1} \frac{\mu(\ell)}\ell
    (\log(-2\ell u^{2\ell}(1-u^{2\ell})Y_{2\ell}^+) +\psi_0(-Y_{2\ell}^+))
    +
    \left(-\frac 1 {Y_1^+} + \sum_{\ell\geq 1}\frac{\mu(\ell)}\ell(\log(-\ell u^\ell(1-u^\ell)Y_\ell^+) +\psi_0(-Y_\ell^+)) \right)^2
    \right.\\&\quad \left.
    -\frac 1 {(Y_1^+)^2}
    +
    \sum_{\ell\geq 1}\frac{\mu(\ell)^2}{\ell^2}
    \psi_1(-Y_\ell^+)
    +2u \left(-\frac 1 {Y_1^+}+\sum_{\ell\geq 1}\frac{\mu(\ell)}\ell(\log(-\ell u^\ell(1-u^\ell)Y_{\ell}^+) +\psi_0(-Y_\ell^+)) \right)
    \right]
    + u^2 (u^2-1) Y_1^+.
    \end{aligned}$
    }
\end{equation*}

This expression can be simplified further.
First, one has the following equality of formal power series
\[
\sum_{\ell \geq 1} \frac{\mu(\ell)}{\ell} \log(1-x^\ell)
=
-\sum_{\ell,n \geq 1} \frac{\mu(\ell)}{\ell} \frac {1}{n} x^{n\ell}
=
-\sum_{N\geq 1}\left(\sum_{d\mid N} \mu(d)\right) x^{N} = -x.
\]
Using this formula three times to absorb the factors $(1-u^{\ell})$ and $(1-u^{2\ell})$ inside the logarithms we obtain 
\begin{equation*} 
\resizebox{.95\hsize}{!}{
$\begin{aligned}
  \chi^u(X)
  &= 
  \frac { u(u-1) Y_1^+} 2\left[
  \sum_{\ell\geq 1} \frac{\mu(\ell)}\ell
  (\log(-2\ell u^{2\ell}Y_{2\ell}^+) +\psi_0(-Y_{2\ell}^+))
  +
  \left(-\frac 1 {Y_1^+} + \sum_{\ell\geq 1}\frac{\mu(\ell)}\ell(\log(-\ell u^\ell Y_\ell^+) +\psi_0(-Y_\ell^+)) \right)^2
  \right.\\&\quad \left.
    -u^2 + u^2-2u^2
    -\frac 1 {(Y_1^+)^2}
 + 
  \sum_{\ell\geq 1}\frac{\mu(\ell)^2}{\ell^2}
  \psi_1(-Y_\ell^+)
  \right]
  + u^2 (u^2-1) Y_1^+
  \\&= 
  \frac { u(u-1) Y_1^+} 2\left[
  \sum_{\ell\geq 1} \frac{\mu(\ell)}\ell
  (\log(-2\ell u^{2\ell}Y_{2\ell}^+) +\psi_0(-Y_{2\ell}^+))
  +
  \left(-\frac 1 {Y_1^+} + \sum_{\ell\geq 1}\frac{\mu(\ell)}l(\log(-\ell u^\ell Y_\ell^+) +\psi_0(-Y_\ell^+)) \right)^2
  \right.\\&\quad \left.
    -\frac 1 {(Y_1^+)^2}
 + 
  \sum_{\ell\geq 1}\frac{\mu(\ell)^2}{\ell^2}
  \psi_1(-Y_\ell^+)
  \right]
  + u^2 (u-1) Y_1^+
  \, .
    \end{aligned}$
    }
\end{equation*}

\begin{proof}[Proof of Theorem \ref{thm:eulerchar}]
  To show Theorem \ref{thm:eulerchar} we just use the above formula for $\chi^u(X)$, with the following modifications.
  First, to obtain the generating function in terms of genera instead of complexity one has to perform the replacements 
  \begin{align}\label{equ:to replace}
    u&\to \hbar \text{ and }\ p_\ell\to \hbar^{-\ell}p_\ell, &&\text{or equivalently,} &  u&\to \hbar \text{ and } Y_\ell^+\to Z_\ell.
  \end{align}

  Second, one needs to mind that the sum \eqref{equ:omega2def} runs only over stable indices $2g+n\geq 3$, while $X_{0,2}\cong \Q[-1]$ is nontrivial by our definition. Hence we need to subtract a term 
  \begin{equation}\label{equ:subtract 1}
    -\frac 1 2 (p_1^2+p_2)=-\frac 1 2 (P_1^2+P_2)-P_1
  \end{equation}
  from the Euler characteristic to account for this difference.
  
  Finally, by Corollary \ref{cor:omega2 and X} there is a further correction $\hbar p_1$ arising from the contribution of $g=n=1$.
  Thus, we start from the formula for $\chi^u(X)$ above, then apply the substitution rule \eqref{equ:to replace}, add $\hbar p_1$ and subtract \eqref{equ:subtract 1} to finally obtain the formula of Theorem \ref{thm:eulerchar}. 
\end{proof}

\section{Discussion of terms in the Euler characteristic formula}
\label{sec:terms discussion}
The literature contains two complementary toolsets for computing dimensions (Hilbert series) and Euler characteristics of graph complexes. The first is the calculus of symmetric functions, typically paired with operadic methods, as in \cite{GK,TT2} and this paper.  The second is combinatorial counting weighted by automorphisms, along the lines of the P\'olya enumeration theorem, as in \cite{CFGP, WZ}.  When the first toolset is applicable, it typically yields relatively economical proofs and closed expressions for the generating functions. However, it also tends to obfuscate natural correspondences between terms in the resulting formulas and subsets of generators for the graph complex. In this section, we shall hence briefly discuss how the individual terms in the formulas of Theorem~\ref{thm:eulerchar} and Corollary~\ref{cor:eulerchar} relate to graph generators for $X_{g,n}$.

First recall from \cite[\S 6.1]{PayneWillwacher} that $X_{g,n}$ is quasi-isomorphic to its subcomplex $X^\star_{g,n}$, generated by graphs $\Gamma$ that are disjoint unions of one or two connected components with $\omega$-decorations together with one of the following: 
\[
(\text{empty graph}),
\quad 
\begin{tikzpicture}
  \node (v) at (0,0) {$\epsilon$};
  \node (w) at (0.7,0) {$\epsilon$};
  \draw (v) edge (w);
\end{tikzpicture},
\quad 
\begin{tikzpicture}
  \node (v) at (0,0) {$\epsilon$};
  \node (w) at (0.7,0) {$j$};
  \draw (v) edge (w);
\end{tikzpicture},
\quad
\begin{tikzpicture}
  \node (v) at (0,.4) {$\epsilon$};
  \node (w) at (0.7,.4) {$j$};
  \draw (v) edge (w);
  \node (vv) at (0,-.4) {$\epsilon$};
  \node (ww) at (0.7,-.4) {$\epsilon$};
  \draw (vv) edge (ww);
\end{tikzpicture}\ .
\]
The contribution of these four graphs to the generating function of the Euler characteristic is precisely 
\[
(1-\hbar)P_1,  
\]
which explains the corresponding factor in \eqref{equ:eulerchar}.

Now, consider the disjoint union of the connected components that contain $\omega$-decorations. 
In genus 0, the only possible graphs are pairs of two trees, as shown.
\[
  \begin{tikzpicture}[yscale=-1]
    \node[int] (v1) at (0,.8) {};
    \node[] (w1) at (0,1.5) {$\omega$};
    \node[int] (v2) at (-.5,.1) {};
    \node[] (w2) at (-1,-.4) {$1$};
    \node[] (e) at (0,-.4) {$3$};
    \node[] (w3) at (.5,.1) {$2$};
    \draw (v1) edge (v2) edge (w3) edge (w1)
    (v2) edge (e) edge (w2);
    \begin{scope}[xshift=2cm]
      \node[int] (v1) at (0,.8) {};
    \node[] (w1) at (0,1.5) {$\omega$};
    \node[] (v2) at (-.5,.1) {$4$};
    \node[] (w3) at (.5,.1) {$5$};
    \draw (v1) edge (w3) edge (w1)
    (v1) edge (v2);
    \end{scope}
  \end{tikzpicture}
\]
The Euler characteristic of the graph complex of rooted trees, like that of $\Lie_0$, is equal to $\sum_{\ell\geq 1} \frac{\mu(\ell)}{\ell} \log P_\ell$. 
Since we have two trees in the graph, we have to take a symmetric product of two such rooted tree complexes.
On the level of Euler characteristics taking the symmetric product translates to the plethysm with the symmetric function $h_2=\frac12(p_1^2+p_2)$, and 
\[
h_2\circ \left(\sum_{\ell\geq 1} \frac{\mu(\ell)}{\ell} \log P_\ell  \right)
=
\frac 1 2 \left[ \left( \sum_{\ell\geq 1} \frac{\mu(\ell)}{\ell} \log P_\ell \right)^2
  + \sum_{\ell\geq 1} \frac{\mu(\ell)}{\ell} \log P_{2\ell} \right],
\]
explaining the corresponding terms in the genus 0 and 1 Euler characteristics.

In genus $g \geq 2$, there are still some graphs with one of the two $\omega$-legs being the root of a tree.
The corresponding contributions to the Euler characteristic hence have a factor $\sum_{\ell\geq 1} \frac{\mu(\ell)}{\ell} \log P_\ell$ as well from the tree part, which is multiplied by the contribution from the non-tree part.

Let us next turn to the remaining terms in the Euler characteristic formulas, coming from the non-tree connected components in graphs. 
The characteristic feature of those terms is that they are finite linear combinations of monomials in the $P_j$ and $P_j^{-1}$. 
This can be seen directly from using graph counting techniques to compute the Euler characteristic, using the same strategy of proof as in \cite{CFGP}.

Consider an arbitrary graph $\Gamma$ in $X_{g,n}$. We call the \emph{core} $\gamma=[\Gamma]$ of $\Gamma$ the graph obtained by the following algorithm:
\begin{itemize}
  \item Remove all numbered external vertices and their adjacent edges.
  \item Recursively remove all univalent internal vertices thus created, with their adjacent edges.
  \item Remove the bivalent vertices thus produced and merge the two edges adjacent to them.
\end{itemize}

Let $X_{g,n}^\gamma\subset X_{g,n}$ be the subcomplex spanned by graphs with core $\gamma$, and set $$z_\gamma := \sum_{n\geq 0} \chi^\bbS(X_{g,n}^\gamma).$$ Then we have
$
\sum_{n\geq 0} \chi^\bbS(X_{g,n})  =
\sum_{\gamma} z_\gamma
$
with the first sum being over all (isomorphism classes of) genus $g$ cores $\gamma$. Assume for simplicity that the core $\gamma$ only has connected components of genera $\geq 1$. 
Then one may show as in \cite[Proposition 3.2]{CFGP} that 
\begin{equation}\label{eq:monomials}
  z_\gamma = \frac1{|\Aut (\gamma)|}
  =
  \sum_{\tau\in \Aut (\gamma)}
  \frac{P^{i(\tau_V)}P^{i(\tau_E)}}{P^{i(\tau_H)}}
\end{equation}
with $\tau_V$, $\tau_E$, $\tau_H$ the permutations on the sets of vertices, edges and half-edges of $\gamma$, with $i(\sigma)=(i_1(\sigma),\dots)$ the cycle type of a permutation $\sigma$, and with the multi-index notation 
\[
  P^{i(\sigma)} = P_1^{i_1(\sigma)} P_{2}^{i_2(\sigma)}\cdots \in \Lambda. 
\]
Since in every genus there are only finitely many possible core graphs contributing, the Euler characteristic in this genus must hence be a finite linear combination of such Laurent monomials in the $P_i$, of the form given by \eqref{eq:monomials}.

Next suppose that the core consists of a single isolated external vertex decorated by $\omega$, and another higher genus component. The graphs with this core are unions of trees with $\omega$-labelled root and some other graph with only one $\omega$-decoration. 
\[
  \begin{tikzpicture}[yscale=-1]
    \node[int] (v1) at (0,.8) {};
    \node[] (w1) at (0,1.5) {$\omega$};
    \node[int] (v2) at (-.5,.1) {};
    \node[] (w2) at (-1,-.4) {$1$};
    \node[] (e) at (0,-.4) {$3$};
    \node[] (w3) at (.5,.1) {$2$};
    \draw (v1) edge (v2) edge (w3) edge (w1)
    (v2) edge (e) edge (w2);
    \begin{scope}[xshift=2cm]
      \node[int] (v1) at (0,.8) {};
    \node[] (w1) at (0,1.5) {$\omega$};
    \node[int] (w4) at (0,-.6) {};
    \node[int] (v2) at (-.5,.1) {};
    \node[int] (w3) at (.5,.1) {};
    \node[] (w3b) at (1.2,.1) {$4$};
    \draw (v1) edge (w3) edge (w1)
    (v1) edge (v2) edge (w4)
    (w4) edge (w3) edge (v2)
    (w3) edge (v2) edge (w3b);
    \end{scope}
  \end{tikzpicture}
\]
These graphs then contribute summands of the form 
\[
  \left(\sum_{\ell\geq 1} \frac{\mu(\ell)}{\ell} \log P_\ell\right) 
  f(P_1,P_2,\dots),
\]
with $f$ some finite sum of Laurent monomials of the special form given by \eqref{eq:monomials}.  
This then explains the structure of the terms appearing in Corollary \ref{cor:eulerchar}.


\begin{thebibliography}{99}

\bibitem{ArbarelloCornalba98}
Enrico Arbarello and Maurizio Cornalba.
\newblock Calculating cohomology groups of moduli spaces of curves via algebraic geometry,
\newblock 
Inst. Hautes \'Etudes Sci. Publ. Math. No. 88 (1998), 97--127.

\bibitem{AroneTurchin}
 Gregory~{Arone} and Victor~{Turchin}.
 \newblock {Graph-complexes computing the rational homotopy of high dimensional
   analogues of spaces of long knots}.
 \newblock Ann. Inst. Fourier 65 (2015), no.~1, 1--62.


\bibitem{CFGP}
Melody Chan, Carel Faber, Soren Galatius and Sam Payne.
\newblock The ${\bbS}_n$-equivariant top weight Euler characteristic of $M_{g,n}$, 
\newblock arXiv:1904.06367, 2019.

\bibitem{CGP1}
Melody Chan, Soren Galatius and Sam Payne.
\newblock Tropical curves, graph complexes, and top weight cohomology of $M_g$,
\newblock J. Amer. Math. Soc. 34 (2021), no.~2, 565--594.

\bibitem{CGP2}
Melody Chan, Soren Galatius and Sam Payne.
\newblock Topology of moduli spaces of tropical curves with marked points
\newblock arxiv:1903.07187, 2019.



\bibitem{FTW2}Benoit Fresse, Victor Turchin and Thomas Willwacher.
\newblock On the rational homotopy type of embedding spaces of manifolds in $\mathbb{R}^n$.
\newblock Preprint, arXiv:2008.08146, 2020.

\bibitem{Gerlits}
Ferenc Gerlits.
\newblock The Euler characteristic of graph complexes via Feynman diagrams.
\newblock Preprint, arXiv:0412094v2, 2004.

\bibitem{Getzler} Ezra Getzler.
\newblock Operads and moduli spaces of genus 0 Riemann surfaces,
\newblock The moduli space of curves (Texel Island, 1994), 199--230,
Progr. Math., 129, Birkh\"auser Boston, Boston, MA, 1995. 

\bibitem{GK} Ezra Getzler and Mikhail Kapranov.
\newblock Modular operads.
\newblock Compositio Math. 110 (1998), no. 1, 65--126. 

\bibitem{Gorsky} Eugene Gorsky,
\newblock The equivariant Euler characteristic of moduli spaces of curves.
\newblock Adv. Math. 250 (2014), 588--595. 

\bibitem{HarerZagier86}
John Harer and Don Zagier.
\newblock The Euler characteristic of the moduli space of curves.
\newblock Invent. Math. 85 (1986), no. 3, 457--485.

\bibitem{KMotives}
Maxim Kontsevich.
\newblock Operads and {M}otives in {D}eformation {Q}uantization.
\newblock Lett. Math. Phys. 48 (1999), 35--72.

\bibitem{K3}
Maxim Kontsevich.
\newblock Formal (non)commutative symplectic geometry. 
\newblock The Gelfand Mathematical Seminars, 1990--1992, 173--187, Birkhäuser Boston, Boston, MA, 1993. 

\bibitem{LambrechtsVolic}
Pascal Lambrechts and Ismar Voli\'c.
\newblock  Formality of the little $N$-disks operad. 
\newblock Mem. Amer. Math. Soc.  
\newblock 230 (2014), no. 1079, viii+116 pp. 

\bibitem{LodayVallette}
Jean-Louis Loday and Bruno Vallette.
\newblock Algebraic operads.
\newblock Grundlehren Math. Wiss., 346, Springer, Heidelberg, 2012. 

\bibitem{Macdonald}
Ian Macdonald.
\newblock Symmetric functions and Hall polynomials.
\newblock Second edition. Oxford Classic Texts in the Physical Sciences. The Clarendon Press, Oxford University Press, New York, 2015. xii+475 pp. 

\bibitem{MadsenWeiss07}
Ib Madsen and Michael Weiss.
\newblock The stable moduli space of Riemann surfaces: Mumford's conjecture. 
\newblock Ann. of Math. (2) 165 (2007), no. 3, 843--941.

\bibitem{Margalit19}
Dan Margalit.
\newblock Problems, questions, and conjectures about mapping class groups. 
\newblock Breadth in contemporary topology, 157--186, Proc. Sympos. Pure Math., 102, Amer. Math. Soc., Providence, RI, 2019. 

\bibitem{PayneWillwacher}
Sam Payne and Thomas Willwacher.
\newblock Weight two compactly supported cohomology of moduli spaces of curves.
\newblock Preprint, arXiv:2110.05711, 2021

\bibitem{PetersSteenbrink}
Chris Peters and Jozef Steenbrink.
\newblock Mixed Hodge structures.
\newblock Ergeb. Math. Grenzgeb. (3), 52, Springer-Verlag, Berlin, 2008.

\bibitem{TT2}
Paul Arnaud Songhafouo Tsopm\'en\'e and Victor Turchin.
\newblock Euler characteristics for spaces of string links and the modular envelope of $L_\infty$. 
\newblock Homology Homotopy Appl. 20 (2018), no. 2, 115–144.

\bibitem{Tommasi05}
Orsola Tommasi.
\newblock Rational cohomology of the moduli space of genus 4 curves. 
\newblock Compos. Math. 141 (2005), no.~2, 359--384.

\bibitem{Turchin}
Victor Turchin.
\newblock Hodge-type decomposition in the homology of long knots.
\newblock {\em J. Topol.}, 3(3):487--534, 2010.

\bibitem{WZ}
Thomas Willwacher and Marko \v{Z}ivkovi\'c.
\newblock Multiple edges in M. Kontsevich's graph complexes and computations of the dimensions and Euler characteristics.
\newblock Adv. Math. 272 (2015), 553--578.


 \end{thebibliography}
\end{document}